\def\cal{\mathcal}
\def\Bbb{\mathbb}
\newenvironment{NB}{
\color{red}{\bf NB}. \footnotesize 
}{}
\newenvironment{NB2}{
\color{blue}{\bf NB}. \footnotesize
}{}
\newcommand{\GL}  {\operatorname{GL}}
\newcommand{\SL}  {\operatorname{SL}}
\newcommand{\rk}{\operatorname{rk}}
\newcommand{\NS}{\operatorname{NS}}
\newcommand{\Pic}{\operatorname{Pic}}
\newcommand{\ch}{\operatorname{ch}}
\newcommand{\td}{\operatorname{td}}
\newcommand{\Coh}{\operatorname{Coh}}
\newcommand{\Amp}{\operatorname{Amp}}
\newcommand{\alg}{\operatorname{alg}}
\newcommand{\Stab}{\operatorname{Stab}}
\newcommand{\Mod}{M}
\font\b=cmr10 scaled \magstep5
\def\bigzerou{\smash{\lower1.7ex\hbox{\b 0}}}
\numberwithin{equation}{section}
\theoremstyle{plain}
 \newtheorem{thm}{Theorem}[section]
 \newtheorem{lem}[thm]{Lemma}
 \newtheorem{prop}[thm]{Proposition}
\theoremstyle{definition}
 \newtheorem{defn}[thm]{Definition}
\theoremstyle{remark}
 \newtheorem{rem}[thm]{Remark}
\begin{document}

\title{A note on stability conditions on 
an elliptic surface.}
\author{K\={o}ta Yoshioka}
\address{Department of Mathematics, Faculty of Science,
Kobe University,
Kobe, 657, Japan
}
\email{yoshioka@math.kobe-u.ac.jp}

\thanks{
The author is supported by the Grant-in-aid for 
Scientific Research (No. 18H01113), JSPS}
\keywords{elliptic surfaces, Bridgeland stability, Fourier-Mukai transforms}

\begin{abstract}
We shall study stability conditions and Fourier-Mukai transforms on an elliptic surface. 
In particular we shall explain duality of elliptic surfaces by Fourier-Mukai transforms.
\end{abstract}

\maketitle

\renewcommand{\thefootnote}{\fnsymbol{footnote}}
\footnote[0]{2010 \textit{Mathematics Subject Classification}. 
Primary 14D20.}

\section{Introduction}

For an abelian variety $X$,
we have a notion of dual $\widehat{X}$.
It is an abelian variety
which is the moduli space $\Pic^0(X)$ of numerically trivial line bundles
on $X$, and $X$ is the moduli space of
numerically trivial line bundles on $\widehat{X}$
via the universal family ${\cal P}$:
We have an isomorphism 
$X \to \Pic^0(\widehat{X})$ by sending $x \in X$ to
${\cal P}_{|\{x \} \times \widehat{X}}$.
We call this property Fourier-Mukai duality in \cite{Y:duality}, since it was a key property
of Mukai's construction of
an equivalence
${\bf D}(X) \to {\bf D}(\widehat{X})$ \cite{Mukai:1981}.
It is now known that 
Fourier-Mukai duality holds for many moduli spaces $X'$ of stable sheaves
on abelian surfaces, K3 surfaces and elliptic surfaces $X$, if $\dim X=\dim X'$ \cite{BBH:1},
\cite{Mu:5}, \cite{PerverseII}.
In \cite{Y:duality}, we explained Fourier-Mukai duality on K3 surfaces by using Bridgeland stability conditions.
 In this note, by introducing a stability condition,  
we shall give a similar explanation for an elliptic surface
(Theorem \ref{thm:rk=0}).
our argument is the same as in \cite{Y:duality}.
As an application, we shall recover a result in \cite{Y:7} that
a suitable twisted stability for a semi-stable 1-dimensional sheaf
is preserved under a Fourier-Mukai transform
by using Bridgeland stability condition (see Proposition \ref{prop:r=0}). 
As another motivation of this note,
we discuss the relation of walls for Bridgeland stability and $\lambda$-stability
in \cite{Y:elliptic} when $X$ is an elliptic K3 surface. 

Let us briefly explain the organization of this note.
In sections \ref{subsect:stability} and \ref{subsect:geometric},
we recall Bridgeland stability conditions.
In section \ref{subsect:ellipticMukai},
we introduce a symmetric pairing on $H^*(X,{\Bbb Q})$ 
which is preserved under Fourier-Mukai transforms. 
By using our pairing, we introduce a stability condition, and show that it is equivalent to
a geometric stability condition constructed by Arcara and Bertram \cite{AB} (Proposition \ref{prop:hat}).
We next study the Fourier-Mukai transform of our stability condition in section \ref{sect:FM}, and prove that
the obtained stability condition is equivalent to 
a geometric stability condition (Proposition \ref{prop:FMstability}). 
Then Theorem \ref{thm:rk=0} easily follows.
All these arguments are the same as in \cite{Y:duality}.
In section \ref{sect:stability} and section \ref{sect:K3}, 
we give applications.

For related results, we would like to remark that Lo and Martinez \cite{Lo-M} obtained almost all results in this note
for the Fourier-Mukai transform between a Weierstrass elliptic surface and 
its compactfied relative Picard scheme.

\section{Stability conditions and Fourier-Mukai transforms on an elliptic surface.}

\subsection{Stability conditions}\label{subsect:stability}
Let us briefly recall some notation on stability conditions.
For more details, see \cite{Br:stability}, \cite{Br:3} and \cite{Br:4}.
A stability condition $\sigma=(Z_\sigma,{\cal P}_\sigma)$ 
on ${\bf D}(X)$
consists of a group homomorphism $Z_\sigma: {\bf D}(X) \to {\Bbb C}$
and a slicing ${\cal P}_\sigma$ of ${\bf D}(X)$ such that if
$0 \ne E \in {\cal P}_\sigma(\phi)$ then 
$Z_\sigma(E) =m(E)\exp(\pi \sqrt{-1} \phi)$ 
for some $m(E) \in {\Bbb R}_{>0}$.
The set of stability conditions has a structure of complex 
manifold.
We denote this space by $\Stab(X)$.

\begin{defn}\label{defn:B-stable}
A $\sigma$-semi-stable object $E$ of phase $\phi$ is
an object of  ${\cal P}_\sigma(\phi)$.
If $E$ is a simple object of ${\cal P}_\sigma(\phi)$, then
$E$ is $\sigma$-stable. 
\end{defn}

By \cite[Prop. 5.3]{Br:stability},
giving a stability condition $\sigma$ is
the same as giving a bounded $t$-structure on ${\bf D}(X)$ 
and a stability function $Z_\sigma$ on its heart ${\cal A}_\sigma$
with the Harder-Narasimhan property.
For $\sigma=(Z_\sigma,{\cal P}_\sigma)$,
we have the relation ${\cal A}_\sigma ={\cal P}_\sigma((0,1])$,
where ${\cal P}_\sigma((0,1])$ is the subcategory of ${\bf D}(X)$
generated by semi-stable objects $E \in {\cal P}_\sigma(\phi)$ with 
$\phi \in (0,1]$.  
Since the pair $(Z_\sigma,{\cal A}_\sigma)$
defines a stability condition, 
we also use the symbol $\sigma=(Z_\sigma,{\cal A}_\sigma)$
to denote a stability condition.
For a $\sigma$-semi-stable object $E \in 
{\cal P}_\sigma(\phi)$, we set $\phi_\sigma(E)=\phi$.
For $E \in {\cal P}_\sigma((0,1])$,
we also set $\phi_\sigma(E) \in (0,1]$ 
by $Z_\sigma(E)=m(E)\exp(\pi \sqrt{-1} \phi_\sigma(E))$. 

For an equivalence $\Phi:{\bf D}(X) \to {\bf D}(X')$,
we have an isomorphism $\Phi:\Stab(X) \to \Stab(X')$
such that 
$\Phi(\sigma)$ $(\sigma \in \Stab(X))$ 
is a stability condition given by
\begin{equation}\label{eq:FM-action}
\begin{split}
Z_{\Phi(\sigma)}=& Z_\sigma \circ \Phi^{-1}:{\bf D}(X') \to {\Bbb C},\\
{\cal P}_{\Phi(\sigma)}(\phi)=& \Phi({\cal P}_\sigma(\phi)).
\end{split}
\end{equation}
We also have an action of the universal covering 
$\widetilde{\GL}_2^+({\Bbb R})$ of $\GL_2^+({\Bbb R})$ on 
$\Stab(X)$.
Since ${\Bbb C}^{\times} \subset  \GL_2^+({\Bbb R})$,
we have an injective homomorphism
${\Bbb C} \to  \widetilde{\GL}_2^+({\Bbb R})$.
Thus 
we have an action of $\lambda \in {\Bbb C}$ on
$\Stab(X)$. 
For a stability condition $\sigma \in \Stab(X)$, 
$\lambda(\sigma)$ is 
given by
\begin{equation}\label{eq:C-action}
\begin{split}
Z_{\lambda(\sigma)}=&
\exp(-\pi  \sqrt{-1} \lambda)Z_\sigma\\
{\cal P}_{\lambda(\sigma)}(\phi)=&
{\cal P}_\sigma(\phi+\mathrm{Re}\lambda).
\end{split}
\end{equation}

\subsection{Geometric stability conditions}\label{subsect:geometric}
Bridgeland \cite{Br:3} and Arcara and Bertram \cite{AB}
constructed a stability condition
$\sigma_{(\beta,\omega)}=(Z_{(\beta,\omega)},{\cal A}_{(\beta,\omega)})$
which is characterized
by the stability of $k_x$ ($x \in X$), where
$Z_{(\beta,\omega)}(\bullet)=
\langle e^{\beta+\sqrt{-1}\omega},\ch(\bullet) \rangle:
{\bf D}(X) \to {\Bbb C}$ is the stability function and
${\cal A}_{(\beta,\omega)}$ 
is an abelian category which is a tilting of $\Coh(X)$
by a torsion pair $({\cal T}_{(\beta,\omega)},{\cal F}_{(\beta,\omega)})$:
\begin{enumerate}
\item
${\cal T}_{(\beta,\omega)}$ is a full subcategory of $\Coh(X)$ generated by
torsion sheaves and $\mu$-stable sheaves $E$ with 
$(c_1(E)-\rk E \beta,\omega)>0$ and
\item
${\cal F}_{(\beta,\omega)}$ is a full subcategory of $\Coh(X)$ generated by
$\mu$-stable sheaves $E$ with 
$(c_1(E)-\rk E \beta,\omega) \leq 0$.
\end{enumerate}
This kind of stability conditions are called geometric.
To be more precise,
for a geometric stability condition
$\sigma=(Z_\sigma,{\cal A}_\sigma)$,
we require that
$\mathrm{Re}\mho, \mathrm{Im}\mho$ span a positive definite
2-plane of $H^*(X,{\Bbb R})$,
where $Z_\sigma(\bullet)=\langle \mho,\bullet \rangle$
with $\mho \in H^*(X,{\Bbb Q})_{\alg} \otimes {\Bbb C}$. 
Up to the action of $\widetilde{\GL}_2^+({\Bbb R})$, 
there is $(\beta,\omega)$ such that
$\sigma=\sigma_{(\beta,\omega)}$.
We shall use this fact for a stability condition
on an elliptic surface (see subsection \ref{subsect:ellipticStab} in particular Lemma \ref{lem:geometric}). 
\begin{NB}
Old:
$\sigma \in \Stab(X)$ satisfies  
(1)
$Z_\sigma(\bullet)=\langle \mho,\bullet \rangle$
with $\langle \mathrm{Re}\mho^2 \rangle=
\langle \mathrm{Im}\mho^2 \rangle$, 
$\langle \mathrm{Re}\mho,\mathrm{Im}\mho \rangle=0$
and
(2)
$k_x$ ($x \in X$) are $\sigma$-stable
with the phase $\phi_{\sigma}(k_x)=1$. Then
$\sigma=\sigma_{(\beta,\omega)}$ for some $(\beta,\omega)$.
\end{NB}
\begin{NB}
\begin{rem}
In order to construct a geometric stability condition $\sigma$
with $Z_\sigma=Z_{(\beta,\omega)}$,
we require that
$$
(\beta, \omega) \in 
\NS(X)_{\Bbb R} \times \Amp(X)_{\Bbb R} \setminus \bigcup_{v_1 \in \Delta(X)}
\left\{(\beta,\omega) \left| \langle e^{\beta+\sqrt{-1} \omega},v_1 \rangle 
\in {\Bbb R}_{\leq 0} \right. \right\},
$$
where $\Delta(X)$ is the set of  $(-2)$-vector of $H^*(X,{\Bbb Z})_{\alg}$.
\end{rem}
\end{NB}

\subsection{Mukai pairing and Fourier-Mukai transform on an elliptic surface.}\label{subsect:ellipticMukai} 

Let $\pi:X \to C$ be an elliptic surface over a curve $C$
and $f$ a fiber of $\pi$.
Let 
$$
H^{*}(X,{\Bbb Q})_{\alg}:=H^0(X,{\Bbb Q}) \oplus \NS(X)_{\Bbb Q} \oplus H^4(X,{\Bbb Q})
$$
be the algebraic part of the cohomology ring $H^*(X,{\Bbb Q})$.
We define $\varrho_X \in H^4(X,{\Bbb Z})$ by
$\int_X \varrho_X=1$.
Then we have
$$
H^{*}(X,{\Bbb Q})_{\alg}={\Bbb Q} \oplus \NS(X)_{\Bbb Q} \oplus {\Bbb Q}\varrho_X.
$$ 
For $u=x_0+x_1+x_2 \varrho_X$ ($x_0,x_2 \in {\Bbb Q}$, $x_1 \in \NS(X)_{\Bbb Q}$),
we set $u^*:=x_0-x_1+x_2 \varrho_X$.

\begin{defn}
\begin{enumerate}
\item[(1)]
We define the Mukai pairing $\langle \;\;,\;\; \rangle$ on 
$H^*(X,{\Bbb Q})_{\alg}$
by
\begin{equation}
\langle u,v \rangle=\int_X u^* v=(x_1 \cdot y_1)-x_0 y_2-x_2 y_0,\;
u=x_0+x_1+x_2 \varrho_X, v=y_0+y_1+y_2 \varrho_X,
\end{equation} 
where $(x_1 \cdot y_1)$ is the intersection pairing of $\NS(X)_{\Bbb Q}$.
\item[(2)]
For $E \in {\bf D}(X)$, we define the Mukai vector of $E$ by
\begin{equation}
v(E):=\ch E (1+\tfrac{\chi({\cal O}_X)}{2}\varrho_X) \in H^*(X,{\Bbb Q})_{\alg}.
\end{equation}
\end{enumerate}
\end{defn}

\begin{lem}
For $E, F \in {\bf D}(X)$, 
\begin{equation}
-\langle v(E),v(F) \rangle=\chi(E,F(\tfrac{1}{2}K_X)).
\end{equation}
\end{lem}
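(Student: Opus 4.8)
The plan is to prove the identity
$$
-\langle v(E),v(F)\rangle = \chi\bigl(E,F(\tfrac{1}{2}K_X)\bigr)
$$
by comparing both sides against the Hirzebruch--Riemann--Roch formula. First I would recall that by definition $\chi(E,F(\tfrac12 K_X)) = \sum_i (-1)^i \dim \Ext^i(E, F(\tfrac12 K_X))$, and that HRR gives
$$
\chi(E,F(\tfrac12 K_X)) = \int_X \ch(E)^\vee \, \ch\bigl(F(\tfrac12 K_X)\bigr)\,\td(X),
$$
where $\ch(E)^\vee$ denotes the dual Chern character, i.e. the component in $H^{2i}$ multiplied by $(-1)^i$. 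The twist by $\tfrac12 K_X$ multiplies $\ch(F)$ by $e^{K_X/2} = 1 + \tfrac12 K_X + \tfrac18 K_X^2$, and since $X$ is a surface the Todd class is $\td(X) = 1 - \tfrac12 K_X + \tfrac{1}{12}(K_X^2 + c_2(X))$. I would collect the product $e^{K_X/2}\td(X)$ and observe that the point is to show this equals $\sqrt{\td(X)}$ up to the relevant degree, so that the twist symmetrizes the formula. Indeed $e^{K_X/2}\td(X) = \td(X)^{1/2}$ in the algebraic cohomology because $\td(X)^{1/2} = 1 - \tfrac14 K_X + \bigl(\tfrac18 K_X^2 + \tfrac{1}{24}(K_X^2+c_2)\bigr)/\!\ldots$ — the cleaner route is to note that the modified Mukai vector $v(E) = \ch(E)(1 + \tfrac{\chi(\mathcal O_X)}{2}\varrho_X)$ is precisely $\ch(E)\sqrt{\td(X)}$ after identifying the degree-four part.

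The key computational step is therefore to verify that
$$
\ch(E)\Bigl(1+\tfrac{\chi(\mathcal O_X)}{2}\varrho_X\Bigr) = \ch(E)\,\sqrt{\td(X)}
$$
in $H^*(X,{\Bbb Q})_{\alg}$, i.e. that the degree-zero part of $\sqrt{\td(X)}$ is $1$, its degree-two part vanishes on the algebraic cohomology ring we work with, and its degree-four part equals $\tfrac{\chi(\mathcal O_X)}{2}\varrho_X$. The vanishing of the degree-two part is the subtle point: a priori $\sqrt{\td(X)}$ has a term involving $-\tfrac14 c_1(X) = \tfrac14 K_X$, which is nonzero in general. Here I expect the role of the twist by $\tfrac12 K_X$ to enter, absorbing exactly this degree-two contribution, so that the relevant product $e^{K_X/2}\sqrt{\td(X)}$ is symmetric. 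For the degree-four part I would use Noether's formula $\chi(\mathcal O_X) = \tfrac{1}{12}(K_X^2 + c_2(X))$ to identify $\int_X \sqrt{\td(X)}$ with $\tfrac{\chi(\mathcal O_X)}{2}$, matching the definition of $v$.

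Once the identification $v(E) = \ch(E)\sqrt{\td(X)}$ (with the $K_X/2$ twist accounted for) is in place, the computation is purely formal: writing $v(E) = x_0 + x_1 + x_2\varrho_X$ and $v(F) = y_0 + y_1 + y_2\varrho_X$, I would expand
$$
-\langle v(E),v(F)\rangle = -\int_X v(E)^* v(F) = -\bigl((x_1\cdot y_1) - x_0 y_2 - x_2 y_0\bigr),
$$
and compare term by term with the HRR expansion of $\chi(E,F(\tfrac12 K_X))$, using that the dualization $u \mapsto u^*$ on $H^*(X)_{\alg}$ reproduces the sign pattern of $\ch(E)^\vee$ in HRR. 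The operator $u\mapsto u^*$ flips the sign of the $\NS(X)_{\Bbb Q}$ part only, which matches $\ch(E)^\vee$ up to the degree-four twist; this is the place where the two definitions are glued.

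The main obstacle I anticipate is bookkeeping the degree-two ($\NS(X)_{\Bbb Q}$) term correctly: the dual $\ch(E)^\vee$ and the Mukai dual $u^*$ must be reconciled against the asymmetric twist $e^{K_X/2}$, and one must check that the canonical class contributions from $\td(X)$, from the twist, and from the symmetrization all cancel to leave the clean pairing stated. This is essentially the standard verification that the Mukai vector with the half-canonical twist turns HRR into a symmetric (Mukai) pairing; the elliptic-surface hypothesis is not needed for this lemma and enters only later, so the proof should be a direct Riemann--Roch calculation valid for any smooth projective surface.
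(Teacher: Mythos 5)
Your overall strategy --- apply Hirzebruch--Riemann--Roch to $\chi(E,F(\tfrac12 K_X))$ and compare the integrand with $v(E)^* v(F)$ --- is the same as the paper's one-line proof, but you never carry out the one step that actually needs checking, and one of your explicit claims is false. The crux is the identity $e^{K_X/2}\td_X=1+\chi({\cal O}_X)\varrho_X$: the degree-two parts do cancel, but the degree-four part of $e^{K_X/2}\td_X$ is $\chi({\cal O}_X)\varrho_X-\tfrac18 K_X^2$, so the two sides of the lemma differ by $-\tfrac18 (K_X^2)\rk(E)\rk(F)$. This discrepancy vanishes here only because $X$ is a (relatively minimal) elliptic surface, so that $K_X$ is supported on fibres and $(K_X^2)=0$. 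Your concluding assertion that ``the elliptic-surface hypothesis is not needed for this lemma'' and that the identity is ``valid for any smooth projective surface'' is therefore wrong: on a smooth sextic surface in ${\Bbb P}^3$ one has $K_X=2H$ with $(K_X^2)=24$, and taking $E=F={\cal O}_X$ gives $\chi({\cal O}_X,{\cal O}_X(\tfrac12 K_X))=\chi({\cal O}_X)-3$, whereas $-\langle v({\cal O}_X),v({\cal O}_X)\rangle=\chi({\cal O}_X)$.

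Separately, the identification $v(E)=\ch(E)\sqrt{\td_X}$ that you offer as the ``cleaner route'' is not correct: $\sqrt{\td_X}$ has degree-two part $-\tfrac14 K_X$, which is nonzero for an elliptic surface of positive Kodaira dimension, while $1+\tfrac{\chi({\cal O}_X)}{2}\varrho_X$ has no degree-two part (the correct relation is $v(E)=\ch(E)\,e^{K_X/4}\sqrt{\td_X}$ modulo $(K_X^2)$-terms). You rightly flag the degree-two bookkeeping as ``the subtle point,'' but you only say you ``expect'' the twist by $\tfrac12 K_X$ to absorb it rather than verifying it. The fix is simply to expand $e^{K_X/2}\td_X$ directly, as the paper does, observe that it equals $1+\chi({\cal O}_X)\varrho_X$ because $(K_X^2)=0$, and note that $(1+\tfrac{\chi({\cal O}_X)}{2}\varrho_X)^*(1+\tfrac{\chi({\cal O}_X)}{2}\varrho_X)=1+\chi({\cal O}_X)\varrho_X$; the $\sqrt{\td_X}$ detour is unnecessary and, as stated, misleading.
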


\begin{proof}
\begin{equation}
\chi(E,F(\tfrac{1}{2}K_X))=\int_X (\ch E)^* \ch F e^{\frac{1}{2}K_X}\td_X=
\int_X v(E)^* v(F)=-\langle v(E),v(F) \rangle.
\end{equation}
\end{proof}

\begin{defn}\label{defn:moduli}
Let $v \in H^*(X,{\Bbb Q})_{\alg}$ be a Mukai vector (of a coherent sheaf).
\begin{enumerate}
\item[(1)]
For an ample divisor $H$ on $X$ and $\beta \in \NS(X)_{\Bbb Q}$,
let $\Mod_H^\beta(v)$ be the moduli space of
$S$-equivalence classes of 
$\beta$-twisted semi-stable sheaves $E$ with
the Mukai vector $v(E)=v$
and ${\cal M}_H^\beta(v)^{ss}$ the moduli stack of
$\beta$-twisted  semi-stable sheaves $E$ with $v(E)=v$. 
\item[(2)]
For a stability condition $\sigma$,
we denote the moduli stack of $\sigma$-semi-stable objects $E$ with 
$v(E)=v$ by ${\cal M}_\sigma(v)$,
where we usually choose 
$\phi:=\mathrm{Im}(\log Z_\sigma(v))/\pi \in (-1,1]$.
We also denote the moduli scheme by $M_\sigma(v)$ if it exists.
\end{enumerate}
\end{defn}

\begin{NB}
\begin{defn}
Let $\sigma$ be a stability condition.
For $v \in H^*(X,{\Bbb Q})_{\alg}$,
we denote the moduli stack of $\sigma$-semi-stable objects $E$ with 
$v(E)=v$ by ${\cal M}_\sigma(v)$,
where we usually choose 
$\phi:=\mathrm{Im}(\log Z_\sigma(v))/\pi \in (-1,1]$.
\end{defn}
\end{NB}

$M_H^\beta(v)$ is a projective scheme.
If $H$ is general in the ample cone,
$\Mod_H^\beta(v)$ and ${\cal M}_H^\beta(v)^{ss}$ are
 independent of the choice of $\beta$.


\begin{defn}
For elliptic surfaces $X \to C$, $Y \to C$
and ${\bf P} \in {\bf D}(X \times Y)$, 
let $\Phi_{X \to Y}^{{\bf P}}:{\bf D}(X) \to {\bf D}(Y)$ be an integral functor
whose kernel is ${\bf P}$:
\begin{equation}
\Phi_{X \to Y}^{{\bf P}}(E)={\bf R}p_{Y*}(p_X^*(E) \otimes {\bf P}), \; \; E \in {\bf D}(X).
\end{equation}
We also set
\begin{equation}
v({\bf P}):=\ch ({\bf P})p_X^*((1+\tfrac{\chi({\cal O}_X)}{2}\varrho_X))
p_Y^*((1+\tfrac{\chi({\cal O}_Y)}{2}\varrho_Y)) \in H^*(X \times Y,{\Bbb Q}).
\end{equation}
\end{defn}

As a consequence of Grothendieck Riemann-Roch theorem, we have the following.
\begin{lem}
For $\Phi:=\Phi_{X \to Y}^{{\bf P}^{\vee}}$,
we have a commutative diagram

\begin{equation}
\begin{CD}
{\bf D}(X) @>{\Phi}>> {\bf D}(Y)\\
@V{v}VV @VV{v}V \\
H^*(X,{\Bbb Q})_{\alg} @>{\overline{\Phi}}>> H^*(Y,{\Bbb Q})_{\alg}
\end{CD}
\end{equation}
where 
\begin{equation}
\overline{\Phi}(\alpha):=p_{Y*}(v({\bf P}^{\vee})p_X^*(e^{-\tfrac{1}{2}K_X})p_X^*(\alpha)).
\end{equation}
\end{lem}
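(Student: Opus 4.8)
The plan is to reduce the commutativity of the diagram to a single identity in $H^*(X,{\Bbb Q})_{\alg}$ and then to verify that identity by Grothendieck--Riemann--Roch together with the special geometry of an elliptic surface. Since $v$ is defined through the Chern character and $\ch(\Phi(E))$ depends only on $\ch(E)$, it suffices to prove $v(\Phi(E))=\overline{\Phi}(v(E))$ at the level of cohomology for every $E \in {\bf D}(X)$. Writing $\tau_X:=1+\tfrac{\chi({\cal O}_X)}{2}\varrho_X$ and $\tau_Y:=1+\tfrac{\chi({\cal O}_Y)}{2}\varrho_Y$, so that $v(E)=\ch(E)\tau_X$ and $v({\bf P}^\vee)=\ch({\bf P}^\vee)\,p_X^*\tau_X\,p_Y^*\tau_Y$, I would first expand $\overline{\Phi}(v(E))$ and pull the factor $p_Y^*\tau_Y$ through $p_{Y*}$ by the projection formula, rewriting the target as
\begin{equation}
\overline{\Phi}(v(E))=\tau_Y\cdot p_{Y*}\bigl(\ch({\bf P}^\vee)\,p_X^*(\tau_X^2\,e^{-\frac12 K_X}\,\ch(E))\bigr).
\end{equation}

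Next I would compute the left-hand side $v(\Phi(E))=\ch(\Phi(E))\tau_Y$ by Grothendieck--Riemann--Roch applied to the projection $p_Y\colon X\times Y\to Y$. Using the multiplicativity $\td(T_{X\times Y})=p_X^*\td(T_X)\,p_Y^*\td(T_Y)$ and cancelling the invertible factor $\td(Y)$ via the projection formula, one obtains
\begin{equation}
\ch(\Phi(E))=p_{Y*}\bigl(\ch({\bf P}^\vee)\,p_X^*(\td(X)\,\ch(E))\bigr),
\end{equation}
so that $v(\Phi(E))=\tau_Y\cdot p_{Y*}(\ch({\bf P}^\vee)\,p_X^*(\td(X)\,\ch(E)))$. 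Comparing with the expression for $\overline{\Phi}(v(E))$ above, the proof reduces to the purely algebraic identity $\tau_X^2\,e^{-\frac12 K_X}=\td(X)$ in $H^*(X,{\Bbb Q})_{\alg}$.

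The heart of the argument, and the only place where the ellipticity of $X$ enters, is this last identity. By Noether's formula $\td(X)=1-\tfrac12 K_X+\chi({\cal O}_X)\varrho_X$, while $\tau_X^2=1+\chi({\cal O}_X)\varrho_X$ (as $\varrho_X^2=0$) and $e^{-\frac12 K_X}=1-\tfrac12 K_X+\tfrac18 K_X^2$. Multiplying these out and discarding the classes $K_X\varrho_X$ and $K_X^2\varrho_X$ for degree reasons, the two sides agree precisely up to the term $\tfrac18 K_X^2$. The point is that on an elliptic surface $\pi\colon X\to C$ the canonical class is numerically supported on fibers (Kodaira's canonical bundle formula), hence $K_X$ is numerically proportional to the fiber class $f$ with $f^2=0$; therefore $K_X^2=0$ and the spurious term vanishes, giving $\tau_X^2\,e^{-\frac12 K_X}=\td(X)$ and completing the identification $v(\Phi(E))=\overline{\Phi}(v(E))$. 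The main obstacle is thus not the (routine) Riemann--Roch bookkeeping but recognizing that the normalization built into $v(\bullet)$ and into $\overline{\Phi}$ reproduces the Todd class exactly because $K_X^2=0$; for a surface with $K_X^2\neq 0$ the stated formula for $\overline{\Phi}$ would not give the Mukai vector of $\Phi(E)$.
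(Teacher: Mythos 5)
Your proof is correct and is exactly the argument the paper has in mind: the paper offers no written proof beyond the remark that the lemma is ``a consequence of Grothendieck Riemann--Roch,'' and your computation supplies precisely that, with the correct identification of the one non-routine point, namely that $\tau_X^2\,e^{-\frac12 K_X}=\td(X)$ because $(K_X^2)=0$ on a (relatively minimal) elliptic surface. Nothing to add.
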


\begin{NB}
Serre duality implies
\begin{equation}
\begin{split}
\chi(E,F(\tfrac{1}{2}K_X))=&\chi(F(\tfrac{1}{2}K_X),E(K_X))=\chi(F,E(\tfrac{1}{2}K_X)).
\end{split}
\end{equation}
\end{NB}

\begin{lem}\label{lem:fiber}
Assume that $\Phi$ is a relative Fourier-Mukai transform, that is,
${\bf P}_{|X \times \{ y \}}$ $(y \in Y)$ is a stable 1-dimensional sheaf on a fiber of $\pi$. 
Then
for $E \in {\bf D}(X)$,
$\overline{\Phi}(v(E)e^{\lambda f})=e^{\lambda f}\overline{\Phi}(v(E))$.
\end{lem}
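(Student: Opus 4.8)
The plan is to reduce the statement to a single cohomological identity on $X\times Y$ and then apply the projection formula. Since $f=\pi^{*}(\mathrm{pt})$ is pulled back from the base curve $C$ on either surface, we have $f^{2}=\pi^{*}(\mathrm{pt}^{2})=0$, so $e^{\lambda f}=1+\lambda f$ and it is enough to prove $\overline{\Phi}(v(E)f)=f\,\overline{\Phi}(v(E))$. Unwinding $\overline{\Phi}(\alpha)=p_{Y*}\!\left(v({\bf P}^{\vee})\,p_X^{*}(e^{-\frac12 K_X})\,p_X^{*}(\alpha)\right)$ and writing $p_X^{*}(v(E)f)=p_X^{*}(v(E))\,p_X^{*}f$, the left-hand side is $p_{Y*}$ of the integrand of $\overline{\Phi}(v(E))$ multiplied by $p_X^{*}f$; by the projection formula the right-hand side is $p_{Y*}$ of the same integrand multiplied by $p_Y^{*}f$. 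Hence everything reduces to
\begin{equation}\label{eq:key-fiber}
v({\bf P}^{\vee})\,p_X^{*}f=v({\bf P}^{\vee})\,p_Y^{*}f\quad\text{in }H^{*}(X\times Y,{\Bbb Q}).
\end{equation}

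To prove \eqref{eq:key-fiber} I would use the relative Fourier--Mukai hypothesis. The assumption that each ${\bf P}_{|X\times\{y\}}$ is a $1$-dimensional sheaf on the fibre $\pi^{-1}(\pi(y))$ forces $\Supp{\bf P}^{\vee}\subseteq X\times_{C}Y$. Writing $j\colon X\times_{C}Y\hookrightarrow X\times Y$ for the inclusion, the Chern character $\ch({\bf P}^{\vee})$, and therefore $v({\bf P}^{\vee})$, lies in the image of $j_{*}$; say $v({\bf P}^{\vee})=j_{*}(\xi)$. Now $p_X^{*}f$ and $p_Y^{*}f$ are the pullbacks of $\mathrm{pt}\in H^{2}(C)$ along the two projections $X\times Y\to C$ (through $X$ and through $Y$), and these two maps become equal after restriction along $j$, since on the fibre product the structure maps to $C$ coincide. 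Thus $j^{*}(p_X^{*}f-p_Y^{*}f)=0$, and the projection formula gives
\begin{equation*}
v({\bf P}^{\vee})(p_X^{*}f-p_Y^{*}f)=j_{*}\!\left(\xi\cdot j^{*}(p_X^{*}f-p_Y^{*}f)\right)=0,
\end{equation*}
which is \eqref{eq:key-fiber}.

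The point needing care --- and what I expect to be the main technical obstacle --- is the claim that $\ch({\bf P}^{\vee})$ factors through $j_{*}$, since ${\bf P}^{\vee}$ is a complex and $X\times_{C}Y$ may be singular. I would argue in $K$-theory: a complex whose cohomology is supported on the closed subscheme $X\times_{C}Y$ represents a class in the image of $K_{0}(X\times_{C}Y)\to K_{0}(X\times Y)$, and Grothendieck--Riemann--Roch then writes its Chern character as $j_{*}$ of a class downstairs (passing to a resolution or factoring through a regular embedding if one insists on a smooth source, the resulting class being independent of the choice). Granting this, the computation above is formal. Finally, if one prefers to avoid the reduction $e^{\lambda f}=1+\lambda f$, the same mechanism applies verbatim with $v({\bf P}^{\vee})$ replaced by any class supported on $X\times_{C}Y$; since $v({\bf P}^{\vee})\,p_Y^{*}f$ is again such a class, one substitutes $p_X^{*}f\mapsto p_Y^{*}f$ one factor at a time to obtain $v({\bf P}^{\vee})e^{\lambda p_X^{*}f}=v({\bf P}^{\vee})e^{\lambda p_Y^{*}f}$, and hence the assertion.
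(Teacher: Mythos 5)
Your proof is correct, but it follows a genuinely different route from the paper's. The paper argues at the level of objects: since $f^2=0$, one has $v(E)e^{\lambda f}=v(E)+\lambda v(E_{|f})$, so the lemma reduces to the identity $v(\Phi(E)_{|f'})=v(\Phi(E_{|f}))$, i.e.\ to the compatibility of the relative transform with restriction to fibres of the elliptic fibration, which the paper asserts without further justification. You instead prove the universal, kernel-level identity $v({\bf P}^{\vee})\,p_X^{*}f=v({\bf P}^{\vee})\,p_Y^{*}f'$ and deduce the statement for all $E$ at once via the projection formula, the support hypothesis entering through $\Supp{\bf P}^{\vee}\subseteq X\times_C Y$, where the two structure maps to $C$ coincide. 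Both arguments ultimately rest on the same geometric fact --- the kernel lives over the fibre product --- but yours makes explicit the base-change input that the paper leaves implicit, at the cost of the technical point you correctly flag: that the Chern character of a complex supported on a (possibly singular) closed subscheme factors through the Gysin map. That point is indeed standard (d\'evissage in $K$-theory plus Riemann--Roch for singular varieties, or, with ${\Bbb Q}$-coefficients, the fact that each component $\ch_k({\bf P}^{\vee})$ is a combination of classes of cycles contained in $X\times_C Y$), so your argument is complete; it is also marginally more general, since it uses only the support condition on ${\bf P}$ and not the stability of its fibres. The paper's version is shorter but delegates the key step to known properties of relative Fourier--Mukai transforms.
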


\begin{proof}
By
\begin{equation}
(\ch E)e^{\lambda f}=\ch E+\lambda \ch(E_{|f}),
\end{equation}
we get 
\begin{equation}
v(E)e^{\lambda f}=v(E)+\lambda v(E_{|f}).
\end{equation}
Then we see that

\begin{equation}
\begin{split}
v(\Phi(E))e^{\lambda f}=& v (\Phi(E))+\lambda v(\Phi(E)_{|f})\\
=& v(\Phi(E))+\lambda v(\Phi(E_{|f}))\\
=& \overline{\Phi}(v(E))+ \overline{\Phi}(\lambda v(E_{|f}))\\
=&\overline{\Phi}( v(E)e^{\lambda f}).
\end{split}
\end{equation}
\end{proof}


\begin{prop}\label{prop:isometry}
Let $\Phi$ be an equivalence in Lemma \ref{lem:fiber}. Then 
$\overline{\Phi}$ preserves
$\langle\;\;,\;\; \rangle$.
Thus 
$$
\langle \overline{\Phi}(v(E_1)),\overline{\Phi}(v(E_2)) \rangle=\langle v(E_1),v(E_2) \rangle,\;
E_1,E_2 \in {\bf D}(X).
$$ 
\end{prop}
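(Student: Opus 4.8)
The plan is to reduce the claimed isometry to a statement purely about Euler characteristics, and then to deduce that statement from two formal properties of the equivalence $\Phi$. Using the Grothendieck--Riemann--Roch compatibility $v\circ\Phi=\overline{\Phi}\circ v$ together with the Riemann--Roch identity $-\langle v(A),v(B)\rangle=\chi(A,B(\tfrac12 K_Y))$ of the lemma above (applied on $Y$), I would first write
\begin{equation*}
\langle\overline{\Phi}(v(E)),\overline{\Phi}(v(F))\rangle=\langle v(\Phi E),v(\Phi F)\rangle=-\chi\big(\Phi E,(\Phi F)(\tfrac12 K_Y)\big),
\end{equation*}
while the same identity on $X$ gives $\langle v(E),v(F)\rangle=-\chi(E,F(\tfrac12 K_X))$. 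Hence the proposition is equivalent to the equality
\begin{equation*}
\chi\big(\Phi E,(\Phi F)(\tfrac12 K_Y)\big)=\chi\big(E,F(\tfrac12 K_X)\big).
\end{equation*}

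The heart of the matter is the half-canonical twist. Because $\pi\colon X\to C$ is an elliptic surface, the canonical bundle formula shows that $K_X$ is numerically proportional to the fiber class $f$, so $K_X^2=0$; cohomologically this forces $e^{K_X/2}=\tfrac12(1+e^{K_X})$, and substituting into the Riemann--Roch integral yields
\begin{equation*}
\chi\big(E,F(\tfrac12 K_X)\big)=\tfrac12\big(\chi(E,F)+\chi(E,F(K_X))\big),
\end{equation*}
and similarly on $Y$. Each summand is now governed by a formal property of the equivalence. Since $\Phi$ is an equivalence it preserves the Euler pairing, $\chi(\Phi E,\Phi F)=\chi(E,F)$; and since it commutes with Serre functors, $\Phi\circ S_X\cong S_Y\circ\Phi$ with $S_X(G)=G(K_X)[2]$ and $\dim X=\dim Y=2$, the shifts cancel and $\Phi(F(K_X))\cong(\Phi F)(K_Y)$. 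Therefore
\begin{equation*}
\chi\big(\Phi E,(\Phi F)(K_Y)\big)=\chi\big(\Phi E,\Phi(F(K_X))\big)=\chi(E,F(K_X)),
\end{equation*}
and averaging with $\chi(\Phi E,\Phi F)=\chi(E,F)$ gives the required equality of half-twisted Euler characteristics, hence the proposition. Since such Mukai vectors span $H^*(X,{\Bbb Q})_{\alg}$, it follows that $\overline{\Phi}$ itself is an isometry.

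The main obstacle is precisely this half-canonical twist: a general derived equivalence preserves the untwisted Euler form automatically, but the bilinear form underlying $\langle\,,\,\rangle$ here is twisted by $\tfrac12 K$, so one must show that $\Phi$ matches the canonical twists on $X$ and $Y$. The relation $K_X^2=0$, special to elliptic surfaces, is what lets me trade the ill-defined half-twist $e^{K_X/2}$ for the genuine canonical twist $e^{K_X}$, after which Serre-functor compatibility finishes the argument. A more ``relative'' alternative, closer to the rest of the paper, is to invoke Lemma \ref{lem:fiber} directly: writing $K_X\equiv\kappa_X f$ and $K_Y\equiv\kappa_Y f$, that lemma shows $\overline{\Phi}$ commutes with multiplication by $e^{\lambda f}$; comparing this with the Serre-functor relation $\overline{\Phi}(v(E)e^{K_X})=\overline{\Phi}(v(E))e^{K_Y}$ forces $\kappa_X=\kappa_Y$, so that $\overline{\Phi}$ commutes with $e^{K/2}$ and the twisted pairing is visibly preserved.
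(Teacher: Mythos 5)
Your proof is correct, but it follows a genuinely different route from the paper's. The paper works with a quasi-inverse $\widehat{\Phi}$ and shows the single adjointness identity $\chi(E,\widehat{\Phi}(F)(\tfrac12 K_X))=\chi(\Phi(E),F(\tfrac12 K_Y))$ by shuttling the half-canonical twist across $\Phi$; the step where the twist moves from $X$ to $X'$ is exactly Lemma \ref{lem:fiber}, applied with $\lambda f=-\tfrac12 K_X$ (using that $K$ is numerically a multiple of the fiber on both sides). You instead never invoke Lemma \ref{lem:fiber} in your main argument: you use $K^2=0$ to replace the formal half-twist by the average $e^{K/2}=\tfrac12(1+e^{K})$, and then quote the two formal invariances of any derived equivalence (preservation of the Euler pairing, compatibility with Serre functors). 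This is a clean trade-off: your argument is more self-contained and makes explicit that the only geometric input is $K_X^2=K_{X'}^2=0$ --- precisely the condition already needed for the paper's identity $-\langle v(E),v(F)\rangle=\chi(E,F(\tfrac12 K))$ to hold --- and it applies to an arbitrary equivalence between such surfaces, not just relative Fourier--Mukai transforms; the paper's version stays inside the relative framework it has just set up and reuses Lemma \ref{lem:fiber} directly. Your closing ``relative alternative'' (deducing $\kappa_X=\kappa_{X'}$ by comparing Lemma \ref{lem:fiber} with the Serre-functor relation, so that $\overline{\Phi}$ commutes with $e^{K/2}$) is essentially a cleaned-up version of what the paper does, and in fact fills in a point the paper leaves implicit, namely why the fiber multiple of $K$ is the same on both sides.
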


\begin{proof}
Let $\widehat{\Phi}:{\bf D}(Y) \to {\bf D}(X)$ be a quasi-inverse of $\Phi$.
For $E \in {\bf D}(X)$ and $F \in {\bf D}(Y)$, by using Lemma \ref{lem:fiber},
we get 
\begin{equation}
\begin{split}
\chi(E,\widehat{\Phi}(F)(\tfrac{1}{2}K_X))=&\chi(E(-\tfrac{1}{2}K_X),\widehat{\Phi}(F))\\
=& \chi(\Phi(E(-\tfrac{1}{2}K_X)),F)\\
=& \chi(\Phi(E)(-\tfrac{1}{2}K_Y),F)\\
=&\chi(\Phi(E),F(\tfrac{1}{2}K_Y)).
\end{split}
\end{equation}
Hence our claim holds.
\end{proof}

\begin{NB}
\begin{lem}
$\chi(\Phi_{X \to Y}^{{\bf P}^* \otimes p_X^*(\omega_X)}(E),F)=
\chi(E, \Phi_{Y \to X}^{{\bf P}}(F))$.
\end{lem}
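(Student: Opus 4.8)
The plan is to recognize $\Phi_{X \to Y}^{{\bf P}^* \otimes p_X^*(\omega_X)}$ as the left adjoint of $\Phi_{Y \to X}^{{\bf P}}$, up to an even shift, and then pass to Euler characteristics. Write $G:=\Phi_{Y \to X}^{{\bf P}}:{\bf D}(Y)\to {\bf D}(X)$. First I would invoke the standard description of adjoints of integral functors coming from Grothendieck--Verdier duality: for a kernel ${\bf K}$ on a product of smooth projective varieties, the left adjoint of an integral functor $\Phi^{{\bf K}}$ has kernel ${\bf K}^\vee$ tensored by the pullback of the canonical bundle of the \emph{source} and shifted by $[\dim(\text{source})]$. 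Applied to $G$, whose source is $Y$ and whose structural projection to the target is $p_X$, this produces a left adjoint $L:{\bf D}(X)\to {\bf D}(Y)$ with kernel ${\bf P}^* \otimes p_X^*(\omega_X)[\dim X]$, that is
\[
L=\Phi_{X \to Y}^{{\bf P}^* \otimes p_X^*(\omega_X)[\dim X]}.
\]

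Second, I would note that shifting the kernel by $[\dim X]$ merely shifts the output, so $L(E)=\Phi_{X \to Y}^{{\bf P}^* \otimes p_X^*(\omega_X)}(E)[\dim X]$. Since $G$ is triangulated and hence commutes with shifts, the adjunction $L\dashv G$ upgrades to isomorphisms of all graded pieces,
\[
\Ext^i_Y\!\bigl(\Phi_{X \to Y}^{{\bf P}^* \otimes p_X^*(\omega_X)}(E)[\dim X],\,F\bigr)\;\cong\;\Ext^i_X\!\bigl(E,\,\Phi_{Y \to X}^{{\bf P}}(F)\bigr),\qquad i\in{\Bbb Z}.
\]
Taking the alternating sum over $i$ gives $\chi_Y\bigl(L(E),F\bigr)=\chi_X\bigl(E,\Phi_{Y \to X}^{{\bf P}}(F)\bigr)$. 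Because $\dim X=2$ is even, the shift $[\dim X]$ contributes a sign $(-1)^{\dim X}=1$ and hence leaves $\chi$ unchanged, so the left-hand side equals $\chi\bigl(\Phi_{X \to Y}^{{\bf P}^* \otimes p_X^*(\omega_X)}(E),F\bigr)$. This is exactly the asserted identity.

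The only delicate point is bookkeeping rather than any genuine obstacle: one must take the \emph{left} adjoint (not the right one, which would carry $\omega_Y$ pulled back along $p_Y$), attach the canonical bundle of the correct factor $\omega_X$ pulled back along $p_X$, and track the shift by $[\dim X]$; the evenness of $\dim X$ is precisely what converts the derived adjunction into the stated numerical equality. As an alternative that stays within the toolkit of the preceding lemmas, one could bypass adjoints and verify the identity directly by Grothendieck--Riemann--Roch, writing both Euler characteristics as integrals over $X\times Y$ against $\ch({\bf P})$, the Todd classes, and $e^{\frac12 K_X}$, and matching the two expressions; this is more computational but uses only GRR, exactly as in the earlier lemmas.
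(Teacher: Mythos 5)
Your proof is correct, but it takes a genuinely different route from the paper's. The paper argues entirely at the numerical level with Mukai vectors: it first rewrites $v(\Phi_{X \to Y}^{{\bf P}^* \otimes p_X^*(\omega_X)}(E))^*$ via Serre duality and Grothendieck--Riemann--Roch as $p_{Y*}\bigl(v({\bf P})\, p_X^*(v(E)^*)\, p_X^*(e^{-\frac{1}{2}K_X})\bigr)$, then unfolds both Euler characteristics to a single integral over $X \times Y$ and uses the projection formula twice (pushing down along $p_Y$, then along $p_X$) to land on $\int_X v(E)^*\, e^{-\frac{1}{2}K_X}\, v(\Phi_{Y \to X}^{{\bf P}}(F))$ --- in other words, the paper's actual proof is precisely the ``computational alternative'' you sketch in your closing sentence. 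Your main argument instead invokes the adjoint-kernel theorem to identify $\Phi_{X \to Y}^{{\bf P}^* \otimes p_X^*(\omega_X)}[\dim X]$ as the left adjoint of $\Phi_{Y \to X}^{{\bf P}}$, which buys a strictly stronger statement: an isomorphism $\Ext^i_Y(L(E),F) \cong \Ext^i_X(E,\Phi_{Y \to X}^{{\bf P}}(F))$ for every $i$, of which the lemma is the alternating sum, with the evenness of $\dim X = 2$ correctly absorbing the shift; it also explains conceptually why $\omega_X$ (and not $\omega_Y$) appears in the kernel, which the paper's computation leaves implicit. The paper's route, by contrast, needs no adjunction machinery and stays within the Mukai-vector calculus already set up in the surrounding lemmas. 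One small caution in your write-up: your verbal rule ``tensored by the canonical bundle of the \emph{source}'' is ambiguous and, read as the source of $G$ (namely $Y$), would give the wrong kernel ${\bf P}^\vee \otimes p_Y^*(\omega_Y)[\dim Y]$, which is the \emph{right} adjoint; the correct statement attaches the canonical bundle of the target of $G$, equivalently the source of the adjoint $L$ being constructed, and that is the formula you in fact apply, so the argument itself is unaffected.
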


\begin{proof}
By the Serre duality, we get
\begin{equation}
\begin{split}
v(\Phi_{X \to Y}^{{\bf P}^* \otimes p_X^*(\omega_X)}(E))^*=& p_{Y*}(v({\bf P})^* p_X^*(v(E)e^{K_X})p_X^*(e^{-\frac{1}{2}K_X}))^*\\
=& p_{Y*}((v({\bf P}) p_X^*(v(E)^*)p_X^*(e^{-\frac{1}{2}K_X})).
\end{split}
\end{equation}

\begin{equation}
\begin{split}
\int_Y v(\Phi_{X \to Y}^{{\bf P}^* \otimes p_X^*(\omega_X)}(E))^* v(F)e^{-\frac{1}{2}K_Y}=&
\int_Y p_{Y*}((v({\bf P}) p_X^*(v(E)^*)p_X^*(e^{-\frac{1}{2}K_X})) v(F) e^{-\frac{1}{2}K_Y}\\
=& \int_{X \times Y} v({\bf P}) p_X^*(v(E)^*)p_X^*(e^{-\frac{1}{2}K_X}) 
p_Y^*(v(F) e^{-\frac{1}{2}K_Y})\\
=& \int_X p_{X*}(v({\bf P}) p_X^*(v(E)^*)p_X^*(e^{-\frac{1}{2}K_X}) 
p_Y^*(v(F) e^{-\frac{1}{2}K_Y}))\\
=& \int_X v(E)^* e^{-\frac{1}{2}K_X}  p_{X*}(v({\bf P}) 
p_Y^*(v(F) e^{-\frac{1}{2}K_Y}))\\
=& \int_X v(E)^* e^{-\frac{1}{2}K_X} v(\Phi_{Y \to X}^{{\bf P}}(F))
\end{split}
\end{equation}
\end{proof}
\end{NB}

\subsection{A stability condition on an elliptic surface.}\label{subsect:ellipticStab} 

Let $\pi:X \to C$ be an elliptic surface over a curve $C$.
We shall introduce a stability condition which has a better behavior than 
the stability condition $\sigma_{(\beta, \omega)}$ in subsection \ref{subsect:geometric}
under the Fourier-Mukai transform $\Phi$.

\begin{defn}\label{defn:hat}
For $(\beta,\omega) \in \NS(X)_{\Bbb R} \times \Amp(X)_{\Bbb R}$ with 
$(\omega^2)>\chi({\cal O}_X)$, 
we set 
$$
\widehat{Z}_{(\beta,\omega)}(E):=\langle e^{\beta+\sqrt{-1} \omega}, v(E) \rangle,\; E \in {\bf D}(X)
$$
and set
\begin{equation}
\widehat{\sigma}_{(\beta,\omega)}:=(\widehat{Z}_{(\beta,\omega)},{\cal A}_{(\beta,\omega)}).
\end{equation}
\end{defn}

We shall prove that $\widehat{\sigma}_{(\beta,\omega)}$ is equivalent to the stability condition
$\sigma_{(\beta,\omega')}$ by the action of $\widetilde{\GL}_2^+({\Bbb R})$.
For $H \in \Amp(X)_{\Bbb R}$, we set $\omega:=tH$ and
$s:=\sqrt{t^2-\frac{\chi({\cal O}_X)}{(H^2)}}$, where $t^2-\frac{\chi({\cal O}_X)}{(H^2)}>0$.
Let $(T,f) \in \widetilde{\GL}_2^+({\Bbb R})$ be an element such that
$T \in \GL_2^+({\Bbb R})$ acts on ${\Bbb R}^2={\Bbb R}+{\Bbb R}\sqrt{-1}$ as 
\begin{equation}
T(x+y \sqrt{-1})=x+\tfrac{s}{t}y \sqrt{-1}
\end{equation}
and the increasing function $f:{\Bbb R} \to {\Bbb R}$ satisfies $f(n)=n$ for $n \in {\Bbb Z}$.
Then we get the following result.
\begin{prop}\label{prop:hat}
$(T,f) \cdot(Z_{(\beta,sH)},{\cal A}_{(\beta,sH)})=(\widehat{Z}_{(\beta,tH)},{\cal A}_{(\beta,tH)})$.
In particular $\widehat{\sigma}_{(\beta,\omega)}$ is a stability condition such that
$k_x$ $(x \in X)$ are stable objects of phase 1.
\end{prop}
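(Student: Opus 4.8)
The plan is to realize $\widehat{\sigma}_{(\beta,tH)}$ as the $\widetilde{\GL}_2^+({\Bbb R})$-translate $(T,f)\cdot\sigma_{(\beta,sH)}$ of the geometric stability condition of Bridgeland and Arcara--Bertram, whose existence (and whose characterizing property, the stability of the $k_x$) is already available from subsection \ref{subsect:geometric}. Recall that the $\widetilde{\GL}_2^+({\Bbb R})$-action is given on central charges by $Z_{(T,f)\cdot\sigma}=T^{-1}\circ Z_\sigma$ and on slicings by ${\cal P}_{(T,f)\cdot\sigma}(\phi)={\cal P}_\sigma(f(\phi))$, specializing to \eqref{eq:C-action} on ${\Bbb C}\subset\widetilde{\GL}_2^+({\Bbb R})$. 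The proposition then splits into two essentially independent checks: that the central charges match, and that the hearts match.

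First I would compute both stability functions explicitly from the Mukai pairing. Writing $r=\rk E$, $D=c_1(E)$ and $d=\int_X\ch_2(E)$, expanding $\langle e^{\beta+\sqrt{-1}\omega},\ch(E)\rangle$ and $\langle e^{\beta+\sqrt{-1}\omega},v(E)\rangle$ gives
\[
Z_{(\beta,\omega)}(E)=(\beta\cdot D)-d-\tfrac r2(\beta^2)+\tfrac r2(\omega^2)+\sqrt{-1}\,\bigl(\omega\cdot(D-r\beta)\bigr),
\qquad
\widehat{Z}_{(\beta,\omega)}(E)=Z_{(\beta,\omega)}(E)-\tfrac{r\chi({\cal O}_X)}2 .
\]
Thus the two differ only in their real part, by the constant $\tfrac{r\chi({\cal O}_X)}2$ produced by the twist $v(E)=\ch(E)(1+\tfrac{\chi({\cal O}_X)}2\varrho_X)$, while the imaginary parts $(\omega\cdot(D-r\beta))$ agree. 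Next I would specialize $\omega=sH$ in $Z$ and $\omega=tH$ in $\widehat{Z}$ and feed in the defining relation $s^2(H^2)=t^2(H^2)-\chi({\cal O}_X)$. The real parts then become literally equal, and the imaginary parts stand in the ratio $t:s$. This is precisely the assertion $\widehat{Z}_{(\beta,tH)}=T^{-1}\circ Z_{(\beta,sH)}$, since $T$ fixes the real axis and scales the imaginary axis by $s/t$; hence the central-charge halves of the two sides of the proposition coincide.

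For the hearts, the torsion pair $({\cal T}_{(\beta,\omega)},{\cal F}_{(\beta,\omega)})$ depends on $\omega$ only through the sign of $(c_1(E)-\rk E\,\beta,\omega)$ for $\mu$-stable $E$; as $sH$ and $tH$ are positive multiples of the same ample class $H$, these signs agree, so ${\cal A}_{(\beta,sH)}={\cal A}_{(\beta,tH)}$. On the other hand the $f$-part of the action sends the heart ${\cal P}_{(\beta,sH)}((0,1])$ to ${\cal P}_{(\beta,sH)}(f((0,1]))$, and $f((0,1])=(0,1]$ because $f$ is increasing with $f(0)=0$ and $f(1)=1$. Hence $(T,f)\cdot\sigma_{(\beta,sH)}$ has heart ${\cal A}_{(\beta,sH)}={\cal A}_{(\beta,tH)}$, which together with the central-charge identity establishes $(T,f)\cdot(Z_{(\beta,sH)},{\cal A}_{(\beta,sH)})=(\widehat{Z}_{(\beta,tH)},{\cal A}_{(\beta,tH)})$. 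Being the image of a stability condition under $\widetilde{\GL}_2^+({\Bbb R})$, the right-hand side is itself a stability condition. Finally, $k_x$ is $\sigma_{(\beta,sH)}$-stable of phase $1$ (one computes $Z_{(\beta,sH)}(k_x)=-1$, and geometricity of $\sigma_{(\beta,sH)}$ makes skyscrapers stable), and since $f(1)=1$ it remains $\widehat{\sigma}_{(\beta,tH)}$-stable of phase $1$.

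I expect the only real friction to be bookkeeping: keeping the $\tfrac{\chi({\cal O}_X)}2\varrho_X$-twist straight so that the difference between the two central charges is exactly the real constant $\tfrac{r\chi({\cal O}_X)}2$, and fixing the $T$ versus $T^{-1}$ convention so that the imaginary-part ratio emerges as $t/s$ rather than $s/t$. The identification of the hearts and the transfer of stability of $k_x$ are then formal consequences of $f$ fixing the integers.
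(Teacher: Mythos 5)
Your proposal is correct and follows essentially the same route as the paper: an explicit expansion of both central charges showing that the relation $s^2(H^2)=t^2(H^2)-\chi({\cal O}_X)$ makes the real parts coincide and the imaginary parts differ by the factor $t/s$, i.e.\ $\widehat{Z}_{(\beta,tH)}=T^{-1}\circ Z_{(\beta,sH)}$, followed by the observation that $f$ fixes $(0,1]$ and that ${\cal A}_{(\beta,sH)}={\cal A}_{(\beta,tH)}$. The only differences are notational (you work with the untwisted Chern character where the paper writes $\ch(E)=e^\beta(r+\xi+a\varrho_X)$) and that you spell out two details the paper leaves implicit, namely why the two hearts agree and why $k_x$ remains stable of phase $1$.
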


\begin{proof}
We set $\sigma':=(T,f) \cdot \sigma_{(\beta,sH)}$.

\begin{equation}
\widehat{Z}_{(\beta,tH)}(E)=T^{-1} \circ Z_{(\beta,sH)}(E).
\end{equation}
We set
\begin{equation}
\ch(E)=e^\beta (r+\xi+a\varrho_X), r \in {\Bbb Z},\; \xi \in \NS(X)_{\Bbb R}, a \in {\Bbb R}.
\end{equation}
Then 
\begin{equation}
\begin{split}
\widehat{Z}_{(\beta,tH)}(E)= & \langle e^\beta(1+\sqrt{-1} tH-\tfrac{t^2(H^2)}{2}\varrho_X),
e^\beta (r+\xi+a\varrho_X)(1+\tfrac{\chi({\cal O}_X)}{2}\varrho_X) \rangle\\
=& \frac{r}{2}(t^2(H^2)-\chi({\cal O}_X))-a+\sqrt{-1} t(H \cdot \xi)\\
=& \frac{r}{2}s^2 (H^2) -a+\sqrt{-1} \frac{t}{s} (sH \cdot \xi)\\
=& T^{-1} \circ Z_{(\beta,sH)}.
\end{split}
\end{equation}
By the definition of $(T,f)$, we have ${\cal P}_{\sigma'}((0,1])={\cal P}_{\sigma_{(\beta,sH)}}((0,1])=
{\cal A}_{(\beta,sH)}={\cal A}_{(\beta,tH)}$.
Hence the claim holds.
\end{proof}

The following claim is proved in \cite[Prop. 10.3]{Br:3}.
\begin{lem}\label{lem:geometric}
Let $H$ be a ${\Bbb Q}$-divisor with $(H^2)>0$.
Let $\sigma=(\langle e^{\beta+\sqrt{-1}tH}, v(\bullet) \rangle,{\cal A})$
be a stability condition such that $k_x$ $(x \in X)$ are stable objects of phase 1.
Then ${\cal A}={\cal A}_{(\beta,tH)}$.
\end{lem}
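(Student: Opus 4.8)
The plan is to recover ${\cal A}$ as a tilt of $\Coh(X)$ and then use the prescribed central charge to pin down the torsion pair, exactly along the lines of Bridgeland's classification. Write $Z:=\langle e^{\beta+\sqrt{-1}tH},v(\bullet)\rangle$ for the given stability function. Since $\sigma=(Z,{\cal A})$ is a stability condition, $Z$ maps ${\cal A}\setminus\{0\}$ into the upper half plane together with the negative real axis; and by hypothesis each $k_x$ is $\sigma$-stable with $\phi_\sigma(k_x)=1$, so in particular $k_x\in{\cal A}={\cal P}_\sigma((0,1])$.

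The first, and main, step is to show that every $E\in{\cal A}$ has its standard cohomology sheaves $H^i(E)$ concentrated in degrees $i=-1,0$. A nonzero morphism between $\sigma$-semistable objects cannot strictly decrease the phase, so from $k_x\in{\cal P}_\sigma(1)$ and $E\in{\cal P}_\sigma((0,1])$ one gets $\Hom(E,k_x[j])=0$ for $j<0$ and $\Hom(k_x[j],E)=0$ for $j>0$. Reading off the extremal corners of the hyper-$\Ext$ spectral sequences $\Ext^p(H^{-q}(E),k_x)\Rightarrow\Hom(E,k_x[p+q])$ and $\Ext^p(k_x,H^q(E))\Rightarrow\Hom(k_x,E[p+q])$, and using Serre duality $\Ext^2(k_x,G)\cong\Hom(G,k_x)^{\vee}$ together with the fact that a coherent sheaf is locally free at a general point to detect the extremal cohomology sheaves, these vanishings force $H^i(E)=0$ for $i\ge1$ and for $i\le-3$; that is, the cohomology lies in degrees $[-2,0]$. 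The remaining point is to exclude a torsion-free sheaf in degree $-2$, which reduces the amplitude to $[-1,0]$.

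Granting the amplitude $[-1,0]$, I would set ${\cal T}:={\cal A}\cap\Coh(X)$ and ${\cal F}:=\{F\in\Coh(X)\mid F[1]\in{\cal A}\}$. A standard tilting argument (Happel--Reiten--Smal\o) then shows that $({\cal T},{\cal F})$ is a torsion pair on $\Coh(X)$ and that ${\cal A}=\langle{\cal F}[1],{\cal T}\rangle$ is the associated tilt. It remains to identify this torsion pair. Because $Z$ is a stability function on ${\cal A}$, a $\mu$-stable sheaf $E$ lies in ${\cal T}$ (resp. in ${\cal F}$) according to the sign of $\mathrm{Im}\,Z(E)$; and by the computation in the proof of Proposition \ref{prop:hat}, writing $\ch E=e^\beta(r+\xi+a\varrho_X)$ one has $\mathrm{Im}\,Z(E)=t(H\cdot\xi)=t\,(c_1(E)-\rk(E)\beta,H)$. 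Hence $E\in{\cal T}\iff(c_1(E)-\rk(E)\beta,\omega)>0$ with $\omega=tH$, while torsion sheaves (for which $\mathrm{Im}\,Z\ge0$) all fall into ${\cal T}$. These are precisely the defining conditions of ${\cal T}_{(\beta,tH)}$ and ${\cal F}_{(\beta,tH)}$, so $({\cal T},{\cal F})=({\cal T}_{(\beta,tH)},{\cal F}_{(\beta,tH)})$ and therefore ${\cal A}={\cal A}_{(\beta,tH)}$.

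The hard part is the first step, the cohomological amplitude bound, and within it the boundary case: the phase inequalities and Serre duality only yield amplitude $[-2,0]$, since $\Hom(k_x,-)$ does not see a torsion-free sheaf in the bottom degree. Excluding a torsion-free sheaf in degree $-2$ is the genuinely delicate point, and it is here that the \emph{stability} (equivalently, simplicity) of the $k_x$, rather than mere semistability, must be exploited; this is the core of Bridgeland's Proposition~10.3. Once the amplitude $[-1,0]$ is established, the tilting formalism and the identification of the torsion pair are routine, the latter being only a matter of reading off the sign of $\mathrm{Im}\,Z$ from the formula already recorded in Proposition \ref{prop:hat}.
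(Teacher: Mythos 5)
The paper does not actually prove this lemma: its entire ``proof'' is the sentence preceding the statement, namely a citation of \cite[Prop.~10.3]{Br:3}. So the relevant comparison is between your outline and Bridgeland's argument, and on that score your skeleton is the right one and is faithful to the cited source: Hom-vanishings from the phase inequalities, the two hyper-$\Ext$ spectral sequences, the Happel--Reiten--Smal{\o} tilt, and the identification of the torsion pair from the sign of $\mathrm{Im}\,Z$ using the computation of $\widehat{Z}_{(\beta,tH)}$ in Proposition~\ref{prop:hat}.

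However, as a standalone proof there is a genuine gap, and you have located it yourself without filling it. The corner arguments only give amplitude $[-2,0]$: the obstruction term $\Ext^2(k_x,H^{-2}(E))\cong\Hom(H^{-2}(E),k_x)^{\vee}$ lands in total degree $0$, i.e.\ in $\Hom(k_x,E)$, which is allowed to be nonzero, and it can moreover be cancelled by the differential $d_2$ from $\Hom(k_x,H^{-1}(E))$, so no contradiction is extracted at this stage. What the easy vanishings do give is $\Hom(k_x,H^{-2}(E))=\Ext^1(k_x,H^{-2}(E))=0$ for all $x$, which forces a putative $H^{-2}(E)$ to be locally free; killing a locally free sheaf in degree $-2$ (and, relatedly, establishing that $H^{-1}(E)$ is torsion-free, which you need in order for your $\mathcal{F}$ to consist of sheaves destined for $\mathcal{F}_{(\beta,tH)}$) is precisely the content of \cite[Lem.~10.1]{Br:3} and requires the stability, not just semistability, of the $k_x$ together with a further argument. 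Writing ``this is the core of Bridgeland's Proposition~10.3'' is an accurate diagnosis but not a proof; either reproduce that argument or do what the paper does and cite it. A second, smaller point: in the torsion-pair identification you should address the boundary case of a $\mu$-stable $E$ with $\mathrm{Im}\,Z(E)=0$, where the sign of $\mathrm{Re}\,\widehat{Z}_{(\beta,tH)}(E)=\tfrac{r}{2}(t^2(H^2)-\chi(\mathcal{O}_X))-a$ is what places $E$ in $\mathcal{F}$; this is where the hypothesis $(\omega^2)>\chi(\mathcal{O}_X)$ from Definition~\ref{defn:hat} (together with Bogomolov's inequality) enters, and it is silently used in your last paragraph.
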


\begin{prop}[{\cite[Prop. 3.2.1]{MYY:2011:1}, \cite[Prop. 6.11]{Y:blowup}}]\label{prop:asymptotic}
Let $L$ be an ample ${\Bbb Q}$-divisor.
For a Mukai vector $v=e^\beta(r+\xi+a \varrho_X)$, we
decompose 
$\xi$ as 
$$
\xi=dL+D,\; d=\frac{(\xi \cdot L)}{(L^2)},\; D \in L^\perp.
$$
We set  $\delta:=\min\{(D \cdot L)>0 \mid D \in \NS(X) \}/(L^2)$.
Then the following claims hold.
\begin{enumerate}
\item[(1)]
Assume that $r>0$.
Then 
${\cal M}_{\widehat{\sigma}_{(\beta,tL)}}(v)={\cal M}_L^{\beta-\frac{1}{2}K_X}(v)^{ss}$
if
\begin{equation}
\frac{t^2(L^2)-\chi({\cal O}_X)}{2}>
\frac{d}{\delta}
\left(d^2\frac{(L^2)}{2}-ra +r^2 \frac{\chi({\cal O}_X)}{2} \right).
\end{equation}
\item[(2)]
Assume that $r=0$. Then 
${\cal M}_{\widehat{\sigma}_{(\beta,tL)}}(v)={\cal M}_L^{\beta-\frac{1}{2}K_X}(v)^{ss}$
if
\begin{equation}
\frac{t^2(L^2)-\chi({\cal O}_X)}{2}>
\frac{d}{\delta}
\left(|a|+d^2(L^2) \right).
\end{equation}
\end{enumerate}
\end{prop}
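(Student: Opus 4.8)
The plan is to establish an asymptotic comparison between the Bridgeland moduli stack $\mathcal{M}_{\widehat{\sigma}_{(\beta,tL)}}(v)$ and the Gieseker-type moduli stack $\mathcal{M}_L^{\beta-\frac{1}{2}K_X}(v)^{ss}$, by showing that when $t$ is large enough (as quantified by the stated inequalities) the two notions of semistability coincide on objects of Mukai vector $v$. The underlying philosophy, following \cite{MYY:2011:1} and \cite{Y:blowup}, is that as the ``imaginary direction'' $\omega=tL$ grows, the central charge $\widehat{Z}_{(\beta,tL)}$ degenerates so that its slope function is governed, to leading order, by the twisted Gieseker slope $(c_1(\bullet)-\rk(\bullet)(\beta-\tfrac12 K_X),L)/\rk$, and the lower-order corrections only matter along the subleading direction measured by $\delta$.

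First I would reduce to comparing slopes. Using Proposition \ref{prop:hat}, I may replace $\widehat{\sigma}_{(\beta,tL)}$ by the geometric condition $\sigma_{(\beta,sL)}$ with $s=\sqrt{t^2-\chi(\mathcal{O}_X)/(L^2)}$, so that the heart is exactly $\mathcal{A}_{(\beta,tL)}=\mathcal{A}_{(\beta,sL)}$, the tilt of $\Coh(X)$ by the torsion pair $(\mathcal{T}_{(\beta,sL)},\mathcal{F}_{(\beta,sL)})$. For $v=e^\beta(r+\xi+a\varrho_X)$ with $\xi=dL+D$, I would write out $\widehat{Z}_{(\beta,tL)}(v)$ explicitly as in the proof of Proposition \ref{prop:hat}, giving real part $\frac{r}{2}(t^2(L^2)-\chi(\mathcal{O}_X))-a+\tfrac12(\xi^2-\text{\dots})$ and imaginary part proportional to $t(L\cdot\xi)=td(L^2)$. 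The key computation is to expand the phase ordering $\phi_{\widehat\sigma}(E_1)\le\phi_{\widehat\sigma}(E)$ for a subobject $E_1\hookrightarrow E$ in the heart and check that, for $t$ beyond the stated threshold, this inequality is forced to agree with the twisted Gieseker inequality; the cross term $\Im\widehat Z(E_1)\Re\widehat Z(E)-\Re\widehat Z(E_1)\Im\widehat Z(E)$ should organize into a leading $t^2$-term controlled by the difference of twisted slopes, plus bounded corrections.

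The main obstacle, and the place where the numerical hypotheses enter, is controlling destabilizing subobjects whose leading twisted slopes tie: when $(L\cdot c_1(E_1)-\cdots)$ equals the corresponding quantity for $E$, the sign of the cross term is decided by the next term, whose size is bounded below by $\delta$ (the minimal positive value of $(D\cdot L)/(L^2)$ over $D\in\NS(X)$) and above by the discriminant-type expression on the right-hand side of the inequalities. I would split into cases $r>0$ and $r=0$: in case (1) the relevant bound on the ``defect'' of a potential destabilizer is the Bogomolov-type quantity $d^2(L^2)/2-ra+r^2\chi(\mathcal{O}_X)/2$, while in case (2) the absence of rank forces the simpler bound $|a|+d^2(L^2)$ governing one-dimensional sheaves. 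In each case I would show that the stated inequality makes the $t^2$-gap strictly dominate the correction, so that no object can be $\widehat\sigma$-destabilized without already being twisted-Gieseker unstable and conversely. Since both references cited in the proposition already carry out this asymptotic analysis in the relevant generality, the remaining work is to verify that the present central charge $\widehat Z_{(\beta,tL)}$, together with the shift $\beta\mapsto\beta-\tfrac12 K_X$ coming from the $e^{-\frac12 K_X}$ normalization in the Mukai vector, reproduces exactly their hypotheses; I expect this bookkeeping to be routine once the slope expansion above is in place.
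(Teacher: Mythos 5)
Your proposal follows essentially the same route as the paper: reduce to the geometric condition $\sigma_{(\beta,sL)}$ with $s^2(L^2)=t^2(L^2)-\chi({\cal O}_X)$ via Proposition \ref{prop:hat}, and then invoke the asymptotic wall-crossing results of \cite{MYY:2011:1} and \cite{Y:blowup}; the only bookkeeping the paper makes explicit that you leave implicit is the identity $\langle v(E)^2\rangle=\langle \ch(E)^2\rangle-r^2\chi({\cal O}_X)$ used to match the discriminant bound in the cited reference. This is the intended argument and your sketch of the underlying slope comparison is consistent with it.
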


\begin{proof}
We note that
${\cal M}_{\widehat{\sigma}_{(\beta,tL)}}(v)={\cal M}_{\sigma_{(\beta,sL)}}(v)$
by Proposition \ref{prop:hat},
where $s^2 (L^2)=t^2 (L^2)-\chi({\cal O}_X)$.
By the definition of $v(E)$,
$\langle v(E)^2 \rangle=\langle \ch(E)^2 \rangle-r^2 \chi({\cal O}_X)$.
Then 
\cite[Prop. 6.11]{Y:blowup} implies the claim.
\end{proof}

\begin{rem}\label{rem:delta}
If $l L \in \NS(X)$, then $\delta \geq \frac{1}{l}$.
\end{rem}

\begin{NB}
\subsection{}

If there is a section, then the wall-crossing problem is reduced to 
the wall-crossing behavior of twisted stability for 1-dimensional sheaves.

It is necessary to relate the birational equivalence classes and
the wall crossing.

\begin{equation}
\begin{split}
-\langle \ch F,\ch E \rangle
=& \int_X (\ch F)^{\vee} \ch E \\
=&
\int_X \ch (F(-\tfrac{1}{2}K_X))^{\vee} \ch E (\td_X-\chi({\cal O}_X))\\
=& \chi(F(-\tfrac{1}{2}K_X),E)-\rk E \chi({\cal O}_X)
\end{split}
\end{equation}

\begin{equation}
\begin{split}
-\langle e^{\beta+\sqrt{-1}\omega},\ch E \rangle
=& \int_X e^{-(\beta+\sqrt{-1}\omega} \ch E \\
=&
\int_X e^{-(\beta-\frac{1}{2}K_X +\sqrt{-1} \omega)}\ch E (\td_X-\chi({\cal O}_X))\\
=& \chi(e^{\beta-\frac{1}{2}K_X +\sqrt{-1} \omega},E)-\rk E \chi({\cal O}_X)
\end{split}
\end{equation}

\begin{equation}
\begin{split}
-\langle e^{\beta+\sqrt{-1}\omega},\ch \Phi(E) \rangle
=& \chi(e^{\beta-\frac{1}{2}K_X +\sqrt{-1} \omega},\Phi(E))-\rk \Phi(E) \chi({\cal O}_X)\\
=& \chi(\Phi(e^{\beta-\frac{1}{2}K_X +\sqrt{-1} \omega}),E)-\rk \Phi(E) \chi({\cal O}_X)\\
=& -\langle \Phi(e^{\beta-\frac{1}{2}K_X +\sqrt{-1} \omega})(\tfrac{1}{2}K_X),\ch E \rangle
+(\rk E \rk \Phi(e^{\beta-\frac{1}{2}K_X +\sqrt{-1} \omega})-\rk \Phi(E) \chi({\cal O}_X)
\end{split}
\end{equation}

If $X=C \times D$, then 
$\chi({\cal O}_X)=0$ and $\Phi(E(-\tfrac{1}{2}K_X))=\Phi(E)(-\frac{1}{2}K_X)$.
Hence $\Phi$ is an isometry.
\end{NB}

\section{Fourier-Mukai duality}\label{sect:FM}

\subsection{A Fourier-Mukai transform on an elliptic surface.}

Let $\pi:X \to C$ be an elliptic surface over a curve $C$
and $f$ a fiber of $\pi$.
Let $H$ be a relatively ample ${\Bbb Q}$-divisor such that
$(H \cdot f)=1$.
Let $v_0=\eta+b \varrho_X$ be a primitive and isotropic Mukai vector 
such that $\eta=r_0 f$, that is, $\eta$ is a nef and effective divisor with
$(\eta^2)=0$.  
Replacing $H$ by $H+x f$, we may assume that
$(H^2)=0$.
We shall consider $\beta$-twisted stability with respect to
$H_n:=H+nf$ $(n \gg 0)$.
We can take $e^\beta$ such that $\langle e^\beta,v_0 \rangle=0$.
Indeed we can choose $y$ 
satisfying
$\langle e^{\beta+yH_n},v_0 \rangle=\langle e^\beta,v_0 \rangle-y(H \cdot \eta)=0$. 
Then $v_0=\eta e^\beta$.
Assume that $X':=\Mod_{H_n}^\beta(v_0)$ is a fine moduli
scheme consisting of $\beta$-twisted stable sheaves. Then
$X'$ is an elliptic surface with a fibration 
$\pi':X' \to C$. We denote a fiber of $X' \to C$ by $f'$.

\begin{lem}
${\cal M}_{H+nf}^\beta(v_0)^{ss}$ depends only on
$H$ and
$\beta \mod {\Bbb Q}f+{\Bbb Q}H$.
In particular ${\cal M}_{H+nf}^{\beta-\frac{1}{2}K_X}(v_0)^{ss}={\cal M}_{H+nf}^\beta(v_0)^{ss}$.
\end{lem}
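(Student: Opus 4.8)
The plan is to reduce $\beta$-twisted semistability of the $1$-dimensional sheaves in question to a comparison of twisted slopes, and then to check that this comparison sees neither $n$ nor the classes $f$ and $H$ inside $\beta$. First I would recall that every sheaf $E$ parametrized by ${\cal M}_{H+nf}^\beta(v_0)^{ss}$ is pure $1$-dimensional, since $v_0=\eta+b\varrho_X$ with $\eta=r_0f$. Such an $E$ is $\beta$-twisted semistable with respect to an ample class $A$ exactly when, for every proper subsheaf $E'\subset E$ with support class $\eta':=c_1(E')$ and with $b'$ the $\varrho_X$-coefficient of $v(E')$, the twisted slopes satisfy $\widehat\mu_\beta(E')\le\widehat\mu_\beta(E)$, where by Grothendieck--Riemann--Roch
\begin{equation}
\chi(E'(mA-\beta))=m(\eta'\cdot A)-(\eta'\cdot\beta)-\tfrac12(\eta'\cdot K_X)+b'
\end{equation}
has degree $1$ in $m$, so for $m\gg0$ the comparison reduces to that of $\widehat\mu_\beta(E')=\big(-(\eta'\cdot\beta)-\tfrac12(\eta'\cdot K_X)+b'\big)/(\eta'\cdot A)$ with the analogous expression $\widehat\mu_\beta(E)$ for $E$.

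Next I would prove independence of $n$. Since $\eta=r_0f$, the sheaf $E$ is supported on finitely many fibers, hence so is every subsheaf $E'$, and $\eta'$ is an effective sum of fiber components; each such component is disjoint from a general fiber, so $(\eta'\cdot f)=(\eta\cdot f)=0$. Taking $A=H_n=H+nf$ then gives $(\eta'\cdot H_n)=(\eta'\cdot H)$ and $(\eta\cdot H_n)=(\eta\cdot H)$, independent of $n$; since the numerators above also contain no $n$, the whole system of inequalities, and thus the (semi)stable locus, depends only on $H$.

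For the dependence on $\beta$ I would replace $\beta$ by $\beta+qf+q'H$ with $q,q'\in{\Bbb Q}$. As $(\eta'\cdot f)=0$, the summand $qf$ contributes nothing, while $q'H$ changes $-(\eta'\cdot\beta)/(\eta'\cdot H)$ by precisely $-q'$ and changes the slope of $E$ by the same $-q'$; thus each stability inequality is shifted on both sides by one and the same constant and is therefore preserved. This shows ${\cal M}_{H+nf}^\beta(v_0)^{ss}$ depends only on $H$ and on $\beta\bmod{\Bbb Q}f+{\Bbb Q}H$. Finally, the canonical bundle formula for the relatively minimal elliptic surface $\pi:X\to C$ shows $K_X$ to be numerically a rational multiple of $f$ (its pullback part is a multiple of $f$, and each multiple fiber is numerically proportional to $f$), so $\tfrac12K_X\in{\Bbb Q}f\subset{\Bbb Q}f+{\Bbb Q}H$ and $\beta-\tfrac12K_X\equiv\beta$, which gives the ``in particular'' clause.

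I expect the main obstacle to be the reduction carried out in the first paragraph: one must verify that twisted Gieseker stability of these $1$-dimensional sheaves genuinely reduces to the slope inequality (using purity of $E$ and the $m\gg0$ normalization of the reduced Hilbert polynomial) and, crucially, that every potentially destabilizing subsheaf has support class orthogonal to $f$. Once this is secured, the intersection-theoretic bookkeeping and the appeal to the canonical bundle formula are routine.
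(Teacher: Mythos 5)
Your proof is correct and follows essentially the same route as the paper: reduce $\beta$-twisted semistability to a comparison of twisted slopes, observe that $(c_1(E')\cdot f)=0$ for every subsheaf of a pure $1$-dimensional sheaf with $c_1=r_0f$, and conclude that the slope inequalities see neither $n$ nor a shift of $\beta$ by an element of ${\Bbb Q}f+{\Bbb Q}H$. The only differences are cosmetic: the paper deduces $(c_1(E_i)\cdot f)=0$ from additivity and nonnegativity of these intersection numbers rather than from the support being contained in fibers, and it leaves implicit the canonical-bundle-formula fact $K_X\in{\Bbb Q}f$ behind the ``in particular'' clause, which you make explicit.
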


\begin{proof}
Let $E$ be a purely 1-dimensional sheaf with $v(E)=v_0$.
For an exact sequence
$$
0 \to E_1 \to E \to E_2 \to 0,
$$

$$
0=(c_1(E) \cdot f)=(c_1(E_1) \cdot f)+(c_1(E_2) \cdot f)
$$
 implies
that $(c_1(E_1) \cdot f)=(c_1(E_2) \cdot f)=0$.
Hence $(c_1(E_i) \cdot (H+yf))$ $(i=1,2)$ is independent of $y$.
Since 
$$
\frac{\chi(e^{\beta+xH+yf},E_i)}{(c_1(E_i) \cdot (H+nf))}=
\frac{\chi(e^{\beta+xH+yf},E_i)}{(c_1(E_i) \cdot H)}=
\frac{\chi(e^\beta,E_i)}{(c_1(E_i) \cdot H)}-x,
$$
the twisted semi-stability is independent of the choice of $x,y$.
\end{proof}

For a universal family ${\cal E}$,
we have a Fourier-Mukai transform
$\Phi:=\Phi_{X \to X'}^{{\cal E}^{\vee}[2]}$. 
We set $v_0':=v({\cal E}_{|\{ x \} \times X'}^{\vee}[1])=
-\overline{\Phi}(\varrho_X)$ and $H':=c_1(\overline{\Phi}(r_0 e^\beta))$.
Then $c_1(v_0')=r_0 f'$, $({H'}^2)=0$ and $(H' \cdot f')=1$.
\begin{NB}
$r_0 (H',f')=(H',c_1(v_0'))=\langle r_0 e^\beta,-\varrho_X \rangle=r_0$.
\end{NB}
Since $\langle \overline{\Phi}(He^\beta)^2 \rangle=0$ and
$\langle \overline{\Phi}(He^\beta),\varrho_{X'} \rangle=(H \cdot \eta)=r_0$,
there is $\beta' \in \NS(X')_{\Bbb Q}$ with
$\overline{\Phi}(He^\beta)=-r_0 e^{\beta'}$.
Then we have 
\begin{equation}\label{eq:H'}
\begin{split}
\overline{\Phi}(r_0 e^\beta)&=H' e^{\beta'}\\
\overline{\Phi}(He^\beta)&=-r_0 e^{\beta'}\\
\overline{\Phi}(v_0)&=\varrho_{X'}\\
\overline{\Phi}(\varrho_X)&=-v_0'.
\end{split}
\end{equation}
By $\langle \overline{\Phi}(He^\beta),\overline{\Phi}(\varrho_X) \rangle=0$, we get
$v_0'=e^{\beta'}(r_0 f')$.
\begin{NB}
$\Phi({\cal E}_{|X \times \{ x' \}})={\Bbb C}_{x'}$.
$\Phi({\Bbb C}_x)=({\cal E}_{|\{x \} \times X'}^{\vee}[1])[-1]$.
For a torsion free sheaf $E$ on an irreducible curve $C \in |nH|$,
$\Phi(E)[-1] \in \Coh(X')$.
\end{NB}
Hence we obtain the following lemma.
\begin{lem}\label{lem:coh-FM}
For 
$$
v=e^\beta(r+\xi+a \varrho_X)=e^\beta(r+pH+qf+D+a \varrho_X), D \in f^\perp \cap H^\perp,
$$
we have 
$$
\overline{\Phi}(v)=e^{\beta'}(\tfrac{r}{r_0}H'-pr_0+\frac{q}{r_0}\varrho_{X'}+D'-ar_0 f'),\; 
D' \in \NS(X') \cap {H'}^\perp \cap {f'}^\perp.
$$
\end{lem}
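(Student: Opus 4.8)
The plan is to compute $\overline{\Phi}$ directly on a basis of $H^*(X,{\Bbb Q})_{\alg}$ compatible with the decomposition $\xi = pH + qf + D$, and then extend by ${\Bbb Q}$-linearity. The relations in \eqref{eq:H'} already give the images of four of the relevant classes; what remains is to pin down the action of $\overline{\Phi}$ on the orthogonal piece $D \in f^\perp \cap H^\perp$ and to track the images of $He^\beta$, $r_0 e^\beta$, $v_0 = \eta e^\beta$, and $\varrho_X$ through the expansion of $v = e^\beta(r + pH + qf + D + a\varrho_X)$.

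First I would expand $v$ by collecting the coefficients against the classes whose images are recorded in \eqref{eq:H'}. Writing $e^\beta \cdot pH = \tfrac{p}{r_0}(r_0 H e^\beta)$ and similarly for the $f$-term via $v_0 = \eta e^\beta = r_0 f e^\beta$, together with $e^\beta \cdot r = \tfrac{r}{r_0}(r_0 e^\beta)$ and the $\varrho_X$-term handled by $\overline{\Phi}(\varrho_X) = -v_0' = -e^{\beta'}(r_0 f')$, I can assemble the contributions. Applying \eqref{eq:H'} term by term: the $r$-term yields $\tfrac{r}{r_0}H'e^{\beta'}$, the $pH$-term yields $-pr_0 e^{\beta'}$, the $qf$-term yields $\tfrac{q}{r_0}\varrho_{X'}$, and the $a\varrho_X$-term yields $-ar_0 e^{\beta'}f'$. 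Collecting these is a matter of bookkeeping and matches the four displayed summands in the target formula.

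The one genuinely new point is the term $e^\beta D$ with $D \in f^\perp \cap H^\perp$: I must show $\overline{\Phi}(e^\beta D) = e^{\beta'}D'$ for some $D' \in \NS(X') \cap {H'}^\perp \cap {f'}^\perp$. The natural route is to use that $\overline{\Phi}$ is an isometry (Proposition \ref{prop:isometry}) and maps the algebraic cohomology lattice into itself, so $\overline{\Phi}(e^\beta D)$ is an algebraic class; pairing it against the known images $H'e^{\beta'}$, $r_0 e^{\beta'}$, $\varrho_{X'}$, and $v_0'$ and invoking isometry reduces to the orthogonality relations $\langle e^\beta D, He^\beta\rangle = (D\cdot H) = 0$, $\langle e^\beta D, v_0\rangle = (D\cdot\eta) = 0$, $\langle e^\beta D, r_0 e^\beta\rangle = 0$, and $\langle e^\beta D, \varrho_X\rangle = 0$. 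These force the image to lie in the orthogonal complement ${H'}^\perp \cap {f'}^\perp$ inside $e^{\beta'}\cdot\NS(X')$, i.e. to be of the claimed form $e^{\beta'}D'$. Here I would also use Lemma \ref{lem:fiber}, which controls how $\overline{\Phi}$ interacts with the fiber-translation action $e^{\lambda f}$, to ensure the $f'$-component of the image is correctly normalized.

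The main obstacle will be confirming that the $D$-component really lands in $e^{\beta'}\NS(X')$ \emph{with no spurious $\varrho_{X'}$ or $f'$ admixture}; the isometry argument controls the pairings but I must check it pins down the class uniquely given that the Mukai pairing is nondegenerate on the relevant rank-$(2{+}\rho)$ lattice. Once the orthogonal block is identified as an isometry $f^\perp\cap H^\perp \to {f'}^\perp\cap{H'}^\perp$ intertwining $e^\beta$ and $e^{\beta'}$, the rest is the linear-algebra assembly above, and the stated formula for $\overline{\Phi}(v)$ follows by summing the five contributions.
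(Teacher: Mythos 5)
Your proposal is correct and is essentially the argument the paper intends: the lemma is stated as an immediate consequence of \eqref{eq:H'}, obtained by expanding $v$ linearly against $r_0e^\beta$, $He^\beta$, $v_0=e^\beta(r_0f)$, $\varrho_X$ and applying \eqref{eq:H'} termwise, with the $D$-component handled exactly as you say via the isometry of Proposition \ref{prop:isometry} (the span of $\{e^{\beta'},H'e^{\beta'},f'e^{\beta'},\varrho_{X'}\}$ is a nondegenerate rank-$4$ sublattice, so its orthogonal complement is precisely $e^{\beta'}(\NS(X')\cap {H'}^\perp\cap {f'}^\perp)$ and the image of $De^\beta$ lands there with no spurious components). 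The appeal to Lemma \ref{lem:fiber} for normalizing the $f'$-part is unnecessary, since the orthogonality relations already pin this down, but it does no harm.
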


\subsection{A Fourier-Mukai transform and a stability condition.}

We take $l,l' \in {\Bbb Z}$ such that $lH \in \NS(X), l' H' \in \NS(X')$.
Since
\begin{equation}
e^{\beta+\sqrt{-1}m(H+nf)}=
e^\beta+\sqrt{-1}m( He^\beta+n f e^\beta)
-nm^2 \varrho_X,
\end{equation}
we have
\begin{equation}\label{eq:elliptic:Phi}
\begin{split}
\overline{\Phi}(e^{\beta+\sqrt{-1}m(H+nf)})
=& \frac{1}{r_0}H' e^{\beta'}+\sqrt{-1}m(-r_0 e^{\beta'}+(n/r_0) \varrho_{X'})
+nm^2 v_0'\\
=& -\sqrt{-1}r_0 m e^{\beta'+\sqrt{-1}\frac{1}{r_0^2m}(H'+r_0^2 m^2 nf')}.
\end{split}
\end{equation}
Then we have a generalization of \cite[2.5]{Y:duality}.
\begin{prop}\label{prop:FMstability}
Assume that $n>\frac{r_0^2}{2}\chi({\cal O}_X)$ and
$n>\tfrac{l r_0^3}{2n}+\frac{\chi({\cal O}_X)}{2}$. Then.
\begin{equation}
\Phi(\widehat{\sigma}_{(\beta,mH_n)}) =
\alpha( \widehat{\sigma}_{(\beta',\frac{1}{r_0^2 m}H'_{r_0^2 m^2 n})}),\;
\alpha=\frac{1}{2}+\frac{\log(r_0 m)}{\pi}\sqrt{-1},
\end{equation}
where we set $H_n':=H'+nf'$.
\begin{NB}
Old version
\begin{equation}
\Phi(\widehat{\sigma}_{(\beta,mH_n)}) =
r_0 m \exp \left(-\pi \frac{\sqrt{-1}}{2}\right)
\widehat{\sigma}_{(\beta',\frac{1}{r_0^2 m}H'_{r_0^2 m^2 n})}),
\end{equation}
\end{NB}
\end{prop}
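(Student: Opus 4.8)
The plan is to establish the claimed identity $\Phi(\widehat{\sigma}_{(\beta,mH_n)}) = \alpha(\widehat{\sigma}_{(\beta',\frac{1}{r_0^2 m}H'_{r_0^2 m^2 n})})$ by separately matching the two data that constitute a stability condition: the central charge $Z$ and the heart $\mathcal{A}$ (equivalently the slicing). For the central charge part I would start from the defining formula \eqref{eq:FM-action}, namely $Z_{\Phi(\sigma)} = Z_\sigma \circ \Phi^{-1}$, and combine it with the already-computed transformation \eqref{eq:elliptic:Phi} of the twisted exponential under $\overline{\Phi}$. Since $\widehat{Z}_{(\beta,mH_n)}(E) = \langle e^{\beta+\sqrt{-1}m(H+nf)}, v(E)\rangle$ and $\overline{\Phi}$ is an isometry for the Mukai pairing by Proposition \ref{prop:isometry}, I can push the exponential through: $\langle e^{\beta+\sqrt{-1}m(H+nf)}, v(E)\rangle = \langle \overline{\Phi}(e^{\beta+\sqrt{-1}m(H+nf)}), \overline{\Phi}(v(E))\rangle$. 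Substituting the right-hand side of \eqref{eq:elliptic:Phi} then exhibits the transformed central charge as $-\sqrt{-1}\,r_0 m$ times the central charge $\widehat{Z}_{(\beta',\frac{1}{r_0^2 m}H')}$ evaluated on $\overline{\Phi}(v(E)) = v(\Phi(E))$, with the divisor class on $X'$ being exactly $H' + r_0^2 m^2 n f' = H'_{r_0^2 m^2 n}$.

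Next I would interpret the scalar factor $-\sqrt{-1}\,r_0 m$ as the $\widetilde{\GL}_2^+(\mathbb{R})$-action recorded in \eqref{eq:C-action}. Writing $-\sqrt{-1}\,r_0 m = r_0 m \exp(-\pi\sqrt{-1}/2)$ and comparing with $Z_{\lambda(\sigma)} = \exp(-\pi\sqrt{-1}\lambda) Z_\sigma$, one reads off $\mathrm{Re}\,\lambda = 1/2$ (the $\exp(-\pi\sqrt{-1}/2)$ rotation) and an imaginary part of $\lambda$ producing the positive scalar $r_0 m$, i.e. $\mathrm{Im}\,\lambda = \log(r_0 m)/\pi$. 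This is precisely $\alpha = \tfrac{1}{2} + \tfrac{\log(r_0 m)}{\pi}\sqrt{-1}$. So on the level of central charges the equality $Z_{\Phi(\widehat{\sigma}_{(\beta,mH_n)})} = Z_{\alpha(\widehat{\sigma}_{(\beta',\ldots)})}$ holds by construction.

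It then remains to match the hearts. Both candidate stability conditions are geometric in the sense of subsection \ref{subsect:geometric}, so I would invoke Lemma \ref{lem:geometric}: a stability condition whose central charge has the form $\langle e^{\beta'+\sqrt{-1}t'H'}, v(\bullet)\rangle$ and for which every skyscraper $k_{x'}$ ($x' \in X'$) is stable of phase $1$ must have heart $\mathcal{A}_{(\beta', t'H')}$. Thus the task reduces to verifying that, after applying $\Phi$ and the $\widetilde{\GL}_2^+(\mathbb{R})$-shift by $\alpha$, the point objects $k_{x'}$ are $\Phi(\widehat{\sigma}_{(\beta,mH_n)})$-stable of phase $1$. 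Since $\Phi$ is a relative Fourier-Mukai transform, the skyscrapers on $X'$ correspond under $\widehat{\Phi}$ to the shifted universal sheaves $\mathcal{E}_{|\{x\}\times X'}^{\vee}[1]$, which are the $\beta$-twisted stable fiber sheaves with Mukai vector $v_0$; here the numerical hypotheses $n > \tfrac{r_0^2}{2}\chi(\mathcal{O}_X)$ and $n > \tfrac{l r_0^3}{2n} + \tfrac{\chi(\mathcal{O}_X)}{2}$ enter, via Proposition \ref{prop:asymptotic} and Remark \ref{rem:delta}, to guarantee that in the chamber $mH_n$ the relevant objects are genuinely $\widehat{\sigma}_{(\beta,mH_n)}$-stable and hence go to stable point objects after transform. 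The phase being $1$ follows from the central-charge computation already carried out, since $\widehat{Z}$ sends $v_0' = v(k_{x'})$ to a negative real number.

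The main obstacle I expect is the heart-matching step rather than the central charge bookkeeping: one must check that the numerical bounds on $n$ are exactly what is needed to place $(\beta,mH_n)$ in a chamber where the fiber sheaves defining $\Phi$ are stable, so that $\Phi$ carries $\widehat{\sigma}_{(\beta,mH_n)}$-stable point objects on $X$ to the corresponding stable objects on $X'$ and, dually, sends skyscrapers $k_{x'}$ to stable objects of phase $1$. Verifying this requires tracking the wall-and-chamber estimates of Proposition \ref{prop:asymptotic} through the Fourier-Mukai correspondence and confirming that both inequalities in the hypothesis are simultaneously used — one controlling the $r>0$ asymptotic region and one the fiber-direction shift by $r_0^2 m^2 n f'$. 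Once stability of the $k_{x'}$ is secured, Lemma \ref{lem:geometric} closes the argument and the two stability conditions coincide.
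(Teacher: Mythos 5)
Your proposal is correct and follows essentially the same route as the paper: compute the transformed central charge via \eqref{eq:elliptic:Phi}, read off the scalar $-\sqrt{-1}\,r_0 m$ as the $\alpha$-action, use Proposition \ref{prop:asymptotic} (with the stated numerical bounds) to show that the fiber sheaves of Mukai vector $v_0$ are $\widehat{\sigma}_{(\beta,mH_n)}$-stable of phase $\tfrac12$, hence that the $k_{x'}$ are stable of phase $1$ after the shift, and close with Lemma \ref{lem:geometric}. (Only a notational slip: the objects sent to $k_{x'}$ are the restrictions ${\cal E}_{|X\times\{x'\}}$, not ${\cal E}^{\vee}_{|\{x\}\times X'}[1]$, though you correctly identify them as the $\beta$-twisted stable sheaves with vector $v_0$.)
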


\begin{proof}
We note that
\begin{equation}
v_0=e^\beta(d H_n+D_n), \; d_n=\frac{(\eta \cdot H_n)}{(H_n^2)}, D_n \in H_n^\perp.
\end{equation}
Since $(H_n^2)=2n$ and
$d_n=\frac{r_0}{(H_n^2)}=\frac{r_0}{2n}$, we get 
\begin{equation}
\frac{m^2(H_n^2)-\chi({\cal O}_X)}{2}>
r_0 l \frac{(\eta \cdot H_n)^2}{(H_n^2)} .
\end{equation}
Proposition \ref{prop:asymptotic} implies that
${\cal E}_{|X \times \{ x' \}} \in {\cal M}_{(\beta,mH_n)}(v_0)$. 
Since
$\widehat{Z}_{(\beta,mH_n)}({\cal E}_{|X \times \{ x' \}}) 
\in \sqrt{-1}{\Bbb R}_{>0}$,
$k_{x'}$ are $\Phi(\widehat{\sigma}_{(\beta,mH_n)})$-stable objects of
phase $\frac{1}{2}$.
By \eqref{eq:elliptic:Phi} and Lemma \ref{lem:geometric},
our claim holds.
\begin{NB}
$$
\phi_{(\beta,mH_n)}({\cal E}_{|X \times \{x'\}})
=\phi_{(\beta',\frac{1}{r_0^2 m}H'_{r_0^2 m^2 n})}(k_{x'})-\frac{1}{2}.
$$
\end{NB}
\end{proof}

\begin{rem}
If $X$ is a Weierstrass elliptic surface and $r_0=1$, then 
a similar claim is proved in \cite{Lo-M} and \cite[Thm. 11.7]{Lo}.

\end{rem}

\begin{rem}
For complex numbers $z,w$ in the upper half plane,
we can also prove that
\begin{equation}
\overline{\Phi}(e^{\beta+zH+w f})=
-r_0 z e^{\beta'+\tfrac{-1}{r_0^2 z}H'+wf'}
\end{equation}
and get a similar claim to Proposition \ref{prop:FMstability} if we set
$\alpha:=\frac{1}{2}+\frac{\log(r_0 z)}{\pi}\sqrt{-1}$.
\begin{NB}
It may be safe to assume that $(z,w)$ is in a neighborhood of $(m \sqrt{-1},mn \sqrt{-1})$.
\end{NB}
\begin{NB}
\begin{equation}
\begin{split}
\overline{\Phi}(e^{\beta+aH+bf+\sqrt{-1}m(H+nf)})
=& \frac{1}{r_0}H' e^{\beta'}-(a+b \sqrt{-1})r e^{\beta'}+(b+mn \sqrt{-1})\tfrac{1}{r_0}\varrho_{X'}
-(a+m \sqrt{-1})(b+mn \sqrt{-1})v_0'\\
=& -r(a+m \sqrt{-1}) 
\exp \left(\beta'+\frac{-a}{r^2(a^2+m^2)}H'+bf'+\sqrt{-1}\frac{m}{r^2(a^2+m^2)}(H'+r^2 (a^2+m^2) nf)' \right).
\end{split}
\end{equation}
\end{NB}
\end{rem}

\begin{NB}
By \cite[Prop. 3.2.1]{MYY:2011:1} (see also \cite[Prop. 6.11]{Y:blowup}), if $H_n$ is ample and
\begin{equation}\label{eq:elliptic:star}
\frac{m^2(H_n^2)}{2}>
1+\frac{(\eta,H_n)^2}{2(H_n^2)},
\end{equation}
\begin{NB2}
$v=e^\beta \eta$.
$\eta=\frac{(\eta \cdot H_n)}{(H_n^2)}H_n+D_n$,
$D_n \in H_n^\perp$.
In particular $a_\beta=0$.

By \cite[6.2]{Y:blowup},
\begin{equation}
\frac{m^2(H_n^2)}{2}>
\frac{(\eta,H_n)}{\delta}
\left(|a_\beta|+1+\frac{(\eta,H_n)^2}{(H_n^2)} \right)
\end{equation}
Since $(f \cdot H)=1$,
\begin{equation}
\frac{m^2(H_n^2)}{2}>
\frac{(\eta,H_n)}{\delta}
\left(|a_\beta|+1+\frac{(\eta,H_n)^2}{(H_n^2)} \right)
\end{equation}

\end{NB2}
then
${\cal M}_{(\beta,mH_n)}(v_0)={\cal M}_{H_n}^\beta(v_0)^{ss}$.
In particular, for $n \gg 0$, we get the claim.

\begin{rem}
Under \eqref{eq:elliptic:star},
$H_{r^2 m^2 n}'$ is ample.
\end{rem}

\end{NB}

\subsection{Fourier-Mukai duality}
In this subsection, we shall treat Fourier-Mukai duality 
for the moduli space of stable 1-dimensional sheaves.
Thus we prove the following result.
\begin{thm}[{cf. \cite[Thm. 3.2.8]{PerverseII}}]\label{thm:rk=0}
Let $\pi:X \to C$ be an elliptic surface
with a fiber $f$.
Let $e^\beta (r_0 f)$ be a primitive and isotropic Mukai vector
with $r_0 >0$ and $H+nf$ an ample divisor with $(H^2)=0$. 
Assume that 
$X':=M_{H+nf}^\beta(e^\beta(r_0 f))$
is a fine moduli space, and let ${\cal E}$ be a universal family.
Then for $\beta',H'$ in \eqref{eq:H'},
we have the following. 
\begin{enumerate}
\item[(1)]
$H'+n' f'$ ($n' \gg 0$) is ample.
\item[(2)]
 ${\cal E}_{|\{ x \} \times X'}$ is $(-\beta')$-twisted stable with respect to
$H'+n' f'$ for all $x \in X$. 
\item[(3)]
\begin{equation}
\begin{matrix}
X & \to & M_{H'+n' f'}^{-\beta'}(e^{-\beta'}(r_0 f'))\\
x & \mapsto & {\cal E}_{|\{ x \} \times X'}
\end{matrix}
\end{equation}
is an isomorphism.
\end{enumerate}
\end{thm}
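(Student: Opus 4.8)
The plan is to derive all three parts from the equivalence of stability conditions in Propositions \ref{prop:hat} and \ref{prop:FMstability}, combined with the asymptotic identification of Bridgeland-stable objects with twisted Gieseker-stable sheaves in Proposition \ref{prop:asymptotic}(2). The geometric mechanism is that the Fourier--Mukai transform $\Phi=\Phi_{X\to X'}^{{\cal E}^\vee[2]}$ sends the skyscraper $k_x$ to ${\cal E}_{|\{x\}\times X'}^\vee[2]=\Phi(k_x)$, whose Mukai vector is $\overline{\Phi}(\varrho_X)=-v_0'$ by \eqref{eq:H'}. Dualizing converts statements about $\Phi(k_x)$ into statements about ${\cal E}_{|\{x\}\times X'}$, and it is precisely this derived dual that produces the sign $-\beta'$ and the Mukai vector $e^{-\beta'}(r_0f')$ appearing in the statement.

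For (1) I would argue exactly as for the ampleness of $H+nf$ on $X$. From \eqref{eq:H'} one has $({H'}^2)=0$ and $(H'\cdot f')=1$, whence $(H'+n'f')^2=2n'>0$ and $((H'+n'f')\cdot f')=1>0$; since $H'$ is relatively ample for $\pi':X'\to C$, for $n'\gg0$ the class $H'+n'f'$ meets every irreducible curve positively and is ample by Nakai--Moishezon.

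For (2), Proposition \ref{prop:hat} gives that each $k_x$ is $\widehat\sigma_{(\beta,mH_n)}$-stable of phase $1$. Applying $\Phi$ and \eqref{eq:FM-action}, the object ${\cal E}_{|\{x\}\times X'}^\vee[2]$ is stable for $\Phi(\widehat\sigma_{(\beta,mH_n)})=\alpha(\widehat\sigma_{(\beta',\omega')})$ with $\omega'=\tfrac1{r_0^2m}(H'+r_0^2m^2nf')$ by Proposition \ref{prop:FMstability}; since the ${\Bbb C}$-action \eqref{eq:C-action} preserves stability, the sheaf ${\cal E}_{|\{x\}\times X'}^\vee[1]$, of Mukai vector $v_0'=e^{\beta'}(r_0f')$, is $\widehat\sigma_{(\beta',\omega')}$-stable. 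Taking $m$ (equivalently $n'=r_0^2m^2n$) large enough that the inequality of Proposition \ref{prop:asymptotic}(2) holds for $v_0'$ on $X'$, this Bridgeland stability coincides with $(\beta'-\tfrac12K_{X'})$-twisted stability with respect to $H'+n'f'$; because $v_0'$ is fibral and $K_{X'}$ is vertical, $(\beta'-\tfrac12K_{X'})$-twisted and $\beta'$-twisted stability agree, so ${\cal E}_{|\{x\}\times X'}^\vee[1]$ is $\beta'$-twisted stable. Finally the derived dual $(\bullet)^\vee[1]$ sends $\beta'$-twisted stable sheaves to $(-\beta')$-twisted stable sheaves and returns ${\cal E}_{|\{x\}\times X'}$, of Mukai vector $e^{-\beta'}(r_0f')$, which is (2).

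For (3), $\Phi$ induces an isomorphism of moduli stacks ${\cal M}_{\widehat\sigma_{(\beta,mH_n)}}(\varrho_X)\simt{\cal M}_{\alpha(\widehat\sigma_{(\beta',\omega')})}(-v_0')$. Because $\widehat\sigma_{(\beta,mH_n)}$ is geometric (Proposition \ref{prop:hat}, Lemma \ref{lem:geometric}), its stable objects of Mukai vector $\varrho_X$ and phase $1$ are exactly the skyscrapers $k_x$, so the left-hand moduli space is $X$; by part (2) and duality the right-hand side is $M_{H'+n'f'}^{-\beta'}(e^{-\beta'}(r_0f'))$, and the induced map is $x\mapsto{\cal E}_{|\{x\}\times X'}$. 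Fineness of $X'$, and hence existence of the universal family ${\cal E}$, upgrades this bijection to an isomorphism of schemes. The step I expect to be most delicate is this last one: one must verify that the skyscrapers exhaust the relevant Bridgeland-stable objects and that $\Phi$ identifies the two moduli problems scheme-theoretically rather than only on points, and one must check that the substitution $\omega\mapsto\omega'$ lands in the asymptotic chamber of Proposition \ref{prop:asymptotic}(2), i.e. that $n'=r_0^2m^2n$ is taken large enough.
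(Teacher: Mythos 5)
Your proposal is correct and follows essentially the same route as the paper: transport the stability condition $\widehat\sigma_{(\beta,mH_n)}$ through $\Phi$ via Proposition \ref{prop:FMstability}, identify the skyscrapers $k_x$ with the $\widehat\sigma_{(\beta',\omega')}$-stable objects of vector $v_0'$, pass to twisted Gieseker stability by Proposition \ref{prop:asymptotic} in the asymptotic chamber (using that $K_{X'}$ is vertical so the $-\tfrac12K_{X'}$ twist is harmless), and dualize to obtain $(-\beta')$-twisted stability of ${\cal E}_{|\{x\}\times X'}$. The paper's own proof is terser (it only writes out part (2) explicitly), but the mechanism is identical.
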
 

\begin{proof}
We note that
\begin{equation}
v_0'=e^{\beta'}(r_0 f')=
e^{\beta'}(d' H_{r_0^2 m^2 n}'+D'),\; d'=\frac{r_0}{2 r_0^2 m^2 n}, D' \in (H_{r_0^2 m^2 n}')^\perp.
\end{equation}
If
\begin{equation}
\frac{2n}{r_0^2}>r_0 l' \frac{1}{2m^2 n}+\frac{\chi({\cal O}_X)}{2},
\end{equation}
then Proposition \ref{prop:asymptotic} implies
$$
{\cal M}_{\widehat{\sigma}_{(\beta',\frac{1}{r_0^2 m}H'_{r_0^2 m^2n})}}(v_0')
={\cal M}_{\frac{1}{r_0^2 m}H'_{r_0^2 m^2n}}^{\beta'-\frac{1}{2}K_{X'}}(v_0')^{ss}.
$$
\begin{NB}
$$
((\tfrac{1}{r_0^2 m} H'_{r_0^2 m^2 n})^2)=\frac{2n}{r_0^2},\; 
\frac{(r_0 f' \cdot H_{r_0^2 m^2 n}')^2}
{({H'_{r_0^2 m^2 n}}^2)}=\frac{1}{2m^2 n}.
$$
\end{NB}

Since 
\begin{equation}
\begin{split}
{\cal M}_{\widehat{\sigma}_{(\beta',\frac{1}{r_0^2 m}H'_{r_0^2 m^2n})}}(v_0')& \cong
{\cal M}_{\widehat{\sigma}_{(\beta,mH_n)}}(\varrho_X)\\
&=\{k_x \mid x \in X \}
\end{split}
\end{equation}
 for
$m^2(H_n^2)>\chi({\cal O}_X)$,
$\Phi(k_x)[-1]={\cal E}_{|\{ x\} \times X'}^{\vee}[1]$
is $\beta'$-twisted stable with respect to $H'_{r_0^2 m^2 n}$
for $n \gg 0$.
Then ${\cal E}_{|\{ x \} \times X'}$ is 
$(-\beta')$-twisted stable with respect to
$H'_{r_0^2 m^2 n}$.
\end{proof}

\section{Preservation of stability}\label{sect:stability}

Let $\Phi$ be the Fourier-Mukai transform in section \ref{sect:FM}.
By Proposition \ref{prop:FMstability}, we have the following.
\begin{prop}\label{prop:FM-stable}
$\Phi$ induces an isomorphism
\begin{equation}
{\cal M}_{H_n}^{\beta-\tfrac{1}{2}K_X}(v)^{ss} \cong {\cal M}_{\widehat{\sigma}_{(\beta',\omega')}}(\overline{\Phi}(v)),
\end{equation}
where $\omega'=\frac{1}{r_0^2 m}H'_{r_0^2 m^2 n}$ and $m \gg n \gg 0$.
\end{prop}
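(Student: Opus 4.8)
The plan is to factor the desired isomorphism through the Bridgeland stability condition $\widehat{\sigma}_{(\beta,mH_n)}$ on $X$ and then transport it to $X'$ by the equivalence $\Phi$. First I would identify the classical twisted moduli stack on the source with a Bridgeland moduli stack. Applying Proposition \ref{prop:asymptotic} with $L:=H_n$ and $t:=m$, for which $(H_n^2)=2n$, the left-hand side of the asymptotic inequality grows like $m^2 n$ while the right-hand side depends only on $v$, $H_n$ and $\delta$ but not on $m$; hence once $m$ is large relative to $n$ the hypothesis is met and
$$
{\cal M}_{H_n}^{\beta-\tfrac{1}{2}K_X}(v)^{ss}={\cal M}_{\widehat{\sigma}_{(\beta,mH_n)}}(v).
$$
(When $\rk v=0$ one invokes part (2) of Proposition \ref{prop:asymptotic} in place of part (1), with the same conclusion.)

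Next I would push this equality forward by $\Phi$. Since $\Phi$ is an equivalence, \eqref{eq:FM-action} shows that it carries $\widehat{\sigma}_{(\beta,mH_n)}$-semi-stable objects to $\Phi(\widehat{\sigma}_{(\beta,mH_n)})$-semi-stable objects and sends the Mukai vector $v$ to $\overline{\Phi}(v)$, so that
$$
{\cal M}_{\widehat{\sigma}_{(\beta,mH_n)}}(v)\cong {\cal M}_{\Phi(\widehat{\sigma}_{(\beta,mH_n)})}(\overline{\Phi}(v)).
$$
By Proposition \ref{prop:FMstability} we may rewrite $\Phi(\widehat{\sigma}_{(\beta,mH_n)})=\alpha(\widehat{\sigma}_{(\beta',\omega')})$, with $\omega'=\frac{1}{r_0^2 m}H'_{r_0^2 m^2 n}$ and $\alpha=\frac{1}{2}+\frac{\log(r_0 m)}{\pi}\sqrt{-1}\in{\Bbb C}$. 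The last step is to absorb this ${\Bbb C}$-action: formula \eqref{eq:C-action} shows that $\alpha\in{\Bbb C}\subset\widetilde{\GL}_2^+({\Bbb R})$ acts on $\widehat{\sigma}_{(\beta',\omega')}$ by rescaling $Z$ and shifting every phase by $\mathrm{Re}\,\alpha=\frac12$, leaving the collection of semi-stable objects unchanged, so
$$
{\cal M}_{\alpha(\widehat{\sigma}_{(\beta',\omega')})}(\overline{\Phi}(v))={\cal M}_{\widehat{\sigma}_{(\beta',\omega')}}(\overline{\Phi}(v)).
$$
Chaining the three identifications yields the claim.

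I expect the only genuinely technical point to be the bookkeeping of the nested regime $m\gg n\gg 0$. One needs $n\gg 0$ first, so that $\widehat{\sigma}_{(\beta',\omega')}$ is an honest geometric stability condition on $X'$: concretely, the transformed class must be ample (Theorem \ref{thm:rk=0}(1)) and one must have $(\omega'^2)>\chi({\cal O}_{X'})$ as demanded by Definition \ref{defn:hat} — here the computation $(\omega'^2)=\frac{2n}{r_0^2}$ makes this precisely the condition $n>\frac{r_0^2}{2}\chi({\cal O}_{X'})$. Only then may one take $m\gg n$ to guarantee the asymptotic hypothesis of Proposition \ref{prop:asymptotic} on the source. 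Verifying that these conditions are simultaneously satisfiable, while tracking the transformed intersection numbers supplied by \eqref{eq:H'} and Lemma \ref{lem:coh-FM}, is the crux; the remainder is a formal composition of the equivalence action with the ${\Bbb C}$-action.
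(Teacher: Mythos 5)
Your proposal is correct and follows exactly the route the paper intends: the paper derives this proposition directly from Proposition \ref{prop:FMstability}, with Proposition \ref{prop:asymptotic} (for $m\gg n$) identifying the twisted Gieseker moduli with ${\cal M}_{\widehat{\sigma}_{(\beta,mH_n)}}(v)$ and the $\widetilde{\GL}_2^+({\Bbb R})$-action being absorbed since it does not change the class of semi-stable objects. Your bookkeeping of the nested regime $m\gg n\gg 0$ matches the hypotheses already imposed in Proposition \ref{prop:FMstability} and Theorem \ref{thm:rk=0}.
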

Thus the $\lambda$-stability in \cite{Y:elliptic} is the $\widehat{\sigma}_{(\beta',\omega')}$-stability
in this proposition.

\begin{prop}[{cf. \cite[Thm. 3.15]{Y:7}, \cite[Thm. 3.13]{Y:twist2}}]\label{prop:r=0}
For $v=e^\beta(\xi+a \varrho_X)$ with $a>0$,
$\Phi[-1]=\Phi_{X \to X'}^{{\cal E}^{\vee}[1]}$ induces an isomorphism
\begin{equation}
{\cal M}_{H_n}^{\beta-\tfrac{1}{2}K_X}(v)^{ss} \cong
 {\cal M}_{H'_{n'}}^{\beta'-\frac{1}{2}K_{X'}}(-\overline{\Phi}(v))^{ss}
\end{equation}
for $n,n' \gg 0$.
\end{prop}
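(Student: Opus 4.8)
The plan is to deduce Proposition \ref{prop:r=0} from the stability-condition version already established in Proposition \ref{prop:FM-stable}, reducing the assertion about twisted Gieseker stability on $X'$ to the geometric description of $\widehat{\sigma}_{(\beta',\omega')}$-stability. First I would apply Proposition \ref{prop:FM-stable} to the Mukai vector $v=e^\beta(\xi+a\varrho_X)$ (note $r=0$ here, so this is the rank-zero case of the theory), obtaining an isomorphism
\begin{equation}
{\cal M}_{H_n}^{\beta-\frac{1}{2}K_X}(v)^{ss}\cong {\cal M}_{\widehat{\sigma}_{(\beta',\omega')}}(\overline{\Phi}(v)),\qquad \omega'=\tfrac{1}{r_0^2 m}H'_{r_0^2 m^2 n}.
\end{equation}
The target moduli stack is of $\sigma$-semistable objects in the derived category, so the point is to identify it with an honest moduli stack of twisted semistable \emph{sheaves} on $X'$.

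The key step is to invoke Proposition \ref{prop:asymptotic}(2) on the surface $X'$, with the roles of $(X,\beta,H,v)$ played by $(X',\beta',\tfrac{1}{r_0^2 m}H'_{r_0^2 m^2 n},-\overline{\Phi}(v))$. I would first compute $\overline{\Phi}(v)$ explicitly using Lemma \ref{lem:coh-FM}: for $v=e^\beta(\xi+a\varrho_X)$ with $\xi=pH+qf+D$, Lemma \ref{lem:coh-FM} gives $\overline{\Phi}(v)=e^{\beta'}(-pr_0+\tfrac{q}{r_0}\varrho_{X'}+D'-ar_0 f')$, so $-\overline{\Phi}(v)$ is (up to the sign normalization built into $\Phi[-1]=\Phi^{{\cal E}^\vee[1]}_{X\to X'}$) the Mukai vector of a genuine coherent sheaf on $X'$, with the hypothesis $a>0$ ensuring the $f'$-coefficient is positive so that the object really sits in $\Coh(X')$ after the shift. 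With $m\gg n\gg 0$ the asymptotic inequality of Proposition \ref{prop:asymptotic}(2) is satisfied for this vector, yielding
\begin{equation}
{\cal M}_{\widehat{\sigma}_{(\beta',\omega')}}(-\overline{\Phi}(v))={\cal M}_{\frac{1}{r_0^2 m}H'_{r_0^2 m^2 n}}^{\beta'-\frac{1}{2}K_{X'}}(-\overline{\Phi}(v))^{ss}.
\end{equation}
Since twisted Gieseker semistability depends only on the polarization up to positive scaling and, by the Lemma in subsection \ref{sect:FM}, only on $\beta'$ modulo ${\Bbb Q}f'+{\Bbb Q}H'$, I would replace $\tfrac{1}{r_0^2 m}H'_{r_0^2 m^2 n}$ by $H'_{n'}$ for a suitable $n'\gg 0$ and conclude that this equals ${\cal M}_{H'_{n'}}^{\beta'-\frac{1}{2}K_{X'}}(-\overline{\Phi}(v))^{ss}$.

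The main obstacle I anticipate is not any single hard estimate but rather the bookkeeping needed to confirm that the three different polarizations and twists genuinely define the same twisted Gieseker moduli space: I must check that as $m,n$ grow the scaling $\tfrac{1}{r_0^2m}H'_{r_0^2m^2n}$ lands in the same chamber of the ample cone as $H'_{n'}$ for large $n'$, and that the twist $\beta'-\tfrac12 K_{X'}$ is unaffected by the reduction modulo ${\Bbb Q}f'+{\Bbb Q}H'$. This is exactly where the independence statement for ${\cal M}_{H+nf}^\beta(v_0)^{ss}$ and Remark \ref{rem:delta} (controlling $\delta$ via $l,l'$) enter, guaranteeing that both the source $H_n$-stability and the target $H'_{n'}$-stability are the stable (large-volume) notions. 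Once these identifications are in place the composite isomorphism
\begin{equation}
{\cal M}_{H_n}^{\beta-\frac{1}{2}K_X}(v)^{ss}\cong{\cal M}_{H'_{n'}}^{\beta'-\frac{1}{2}K_{X'}}(-\overline{\Phi}(v))^{ss}
\end{equation}
follows, completing the proof and recovering the preservation-of-stability results of \cite{Y:7} and \cite{Y:twist2}.
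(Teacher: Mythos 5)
Your overall strategy coincides with the paper's: compute $\overline{\Phi}(v)$ via Lemma \ref{lem:coh-FM}, combine Proposition \ref{prop:FM-stable} with Proposition \ref{prop:asymptotic}(2) applied on $X'$ to the vector $-\overline{\Phi}(v)$ for $m\gg n\gg 0$, and then match polarizations. The one place where your argument has a real gap is the step where the hypothesis $a>0$ is used. You justify the passage from the stack of $\widehat{\sigma}_{(\beta',\omega')}$-semistable \emph{objects} to a stack of twisted semistable \emph{sheaves} by saying that $a>0$ makes the $f'$-coefficient of $-\overline{\Phi}(v)$ positive, ``so that the object really sits in $\Coh(X')$ after the shift.'' Positivity of a coefficient of the Mukai vector does not by itself place $\Phi(E)[-1]$ in $\Coh(X')$, nor even in the heart ${\cal A}_{(\beta',\omega')}$: a class with that numerical shape can be realized by a genuine two-term complex, and Proposition \ref{prop:asymptotic} only identifies the moduli of semistable objects of the appropriate phase with the sheaf-theoretic moduli, so you must first pin down which shift of $\Phi(E)$ has phase in $(0,1]$.

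The paper closes this gap with a phase computation: for $E$ in the source moduli space, $\widehat{Z}_{(\beta,mH_n)}(E)=-a+(\xi\cdot mH_n)\sqrt{-1}$ has negative real part precisely because $a>0$, hence $\phi_{\widehat{\sigma}_{(\beta,mH_n)}}(E)>\tfrac{1}{2}=\phi_{\widehat{\sigma}_{(\beta,mH_n)}}({\cal E}_{|X\times\{x'\}})$. Since Proposition \ref{prop:FMstability} says $\Phi$ acts by the shift $\alpha$ with $\mathrm{Re}\,\alpha=\tfrac{1}{2}$, one gets $\phi_{\widehat{\sigma}_{(\beta',\omega')}}(\Phi(E))>1$, i.e. $\Phi(E)\in{\cal A}_{(\beta',\omega')}[1]$, which is exactly the statement that $\Phi(E)[-1]$ lies in the heart with Mukai vector $-\overline{\Phi}(v)$; only then does Proposition \ref{prop:asymptotic}(2), with the explicit inequality in $d$, $\delta$ and $({\omega'}^2)=2n/r_0^2$, convert this into $(\beta'-\tfrac{1}{2}K_{X'})$-twisted semistability. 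You should replace your coefficient-positivity heuristic with this phase comparison; the rest of your outline, including the polarization bookkeeping, is consistent with the paper's proof.
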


\begin{proof}
In the notation of Lemma \ref{lem:coh-FM},
we have
\begin{equation}
\begin{split}
\overline{\Phi}(v)=& -e^{\beta'}(pr_0+a r_0 f'-\tfrac{q}{r_0}\varrho_{X'}-D')\\
=& -
e^{\beta'}(pr_0+d(H_{r_0^2 m^2 n}-(H'-r_0^2 m^2 n f'))-D'-\tfrac{q}{r_0}\varrho_{X'}),
\end{split}
\end{equation}
where
\begin{equation}
d=\frac{ar_0}{2r_0^2 m^2 n}>0,\;
d^2 ({H_{r_0^2 m^2 n}'}^2)=\frac{a^2}{2m^2 n}.
\end{equation}
For $E \in {\cal M}_{H_n}^{\beta-\tfrac{1}{2}K_X}(v)^{ss}$, 
$\widehat{Z}_{(\beta,mH_n)}(E)=-a+(\xi \cdot mH_n)\sqrt{-1}$.
Since 
$\widehat{Z}_{(\beta,mH_n)}({\cal E}_{|X \times \{ x' \}}) \in {\Bbb Z}_{>0}\sqrt{-1}$,
$\phi_{\widehat{\sigma}_{(\beta,mH_n)}}({\cal E}_{|X \times \{ x' \}})=\tfrac{1}{2}<\phi_{\widehat{\sigma}_{(\beta,mH_n)}}(E)$.
Then we get $\phi_{\widehat{\sigma}_{(\beta',\omega')}}(\Phi(E))>1$, where
$\omega'=\frac{1}{r_0^2 m}H'_{r_0^2 m^2 n}$.
Thus $\Phi(E) \in {\cal A}_{(\beta',\omega')}[1]$.
Since
\begin{equation}
({\omega'}^2)=2\frac{n}{r_0^2},
\end{equation}
we can take 
$m \gg n \gg 0$ such that
\begin{equation}
\frac{({\omega'}^2)-\chi({\cal O}_X)}{2}>
\frac{d}{\delta}\frac{d^2 ({H_{r_0^2 m^2 n}'}^2)-2pq}{2}.
\end{equation}
Then Proposition \ref{prop:FM-stable} and Proposition \ref{prop:asymptotic} imply the claim.
\end{proof}

\begin{rem}
\begin{enumerate}
\item[(1)]
If $a<0$, then we also see that $\Phi(E)^{\vee}$ is a twisted stable sheaf, which is treated in  
\cite[Prop. 3.4.5]{PerverseII}.
\item[(2)]
If $r_0=1$, then Lo and Martinez \cite[Thm. 5.12]{Lo-M}
get the same result. 
\end{enumerate}
\end{rem}

\section{Stability conditions on an elliptic K3 surface}\label{sect:K3}

\subsection{Walls for $v=1-\ell \varrho_X$ near the boundary of $\Stab(X)$
associated to the elliptic fibration. }

Let $\pi:X \to C$ be an elliptic K3 surface.
Then Bayer and Macri \cite{BM:2} classified Bridgeland walls.
In this subsection, we shall study walls for the Mukai vector $1-\ell \varrho_X$ near a boundary of $\Stab(X)$.
For a primitive Mukai vector $v$, 
let $P$ be the positive cone in $v^\perp$ and $\overline{P}$ its closure.
Let ${\cal W} \subset \Stab(X)$ be a wall for $v$ defined by a
Mukai vector $u \in H^{*}(X,{\Bbb Z})_{\alg}$.
Then ${\cal W}$ defines a wall $W:=u^\perp$ in $\overline{P}$. 
For $(\beta, \omega) \in \NS(X)_{\Bbb R} \times \Amp(X)_{\Bbb R}$ with 
$(\omega^2)>\chi({\cal O}_X)=2$,
we set
\begin{equation}
\xi(\beta,\omega):=[
\mathrm{Im} (\overline{\langle e^{\beta+\sqrt{-1} \omega},v) \rangle}
e^{\beta+\sqrt{-1} \omega})] \in P/{\Bbb R}_{>0},
\end{equation}
where $[u]:={\Bbb R}_{>0}u$ is the equivalence class of $u \in P$ in 
$P/{\Bbb R}_{>0}$.
We have an identification
\begin{equation}
\theta_v:v^\perp \cong H^2(M_H(v),{\Bbb Z})_{\alg}
\end{equation}
by using the universal family of stable sheaves. 
Under this identification, the boundaries of the movable cone and the nef cones of  
minimal models of $M_H(v)$ are $W=u^\perp$ defining 
walls for $\Stab(X)$. 
For $v=1-\ell \varrho_X$,
we have 
\begin{equation} 
v^\perp={\Bbb Z}\nu+\NS(X),\;\nu:=1+\ell \varrho_X.
\end{equation}
In this case, $[f] \in \overline{P}$ and  
defines a fibration $M_H(v) \to S^{\ell+1} {\Bbb P}^1$.
Let us classify walls near $[f]$.

\begin{lem}\label{lem:f-wall}
Let $W$ be a wall for $v=1-\ell \varrho_X$ in $\overline{P}$ such that $f \in W$.
Then there is a Mukai vector $u=\xi+a \varrho_X$ such that $W=u^\perp$ and one of the following hold:
\begin{enumerate}
\item[(1)]
$\xi \in {\Bbb Z}f$ and $-\ell-1 \leq a \leq 0$ or
\item[(2)]
$\xi$ or $-\xi$ is an effective $(-2)$-divisor in a fiber of $\pi$ and
$-\ell \leq a \leq 0$,
\end{enumerate}
\end{lem}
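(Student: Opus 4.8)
The plan is to reduce to a rank-zero defining class and then run a short lattice computation inside $f^\perp$. First I would observe that a wall $W=u^\perp$ in $\overline P$ depends on the defining Mukai vector $u$ only through the real $2$-plane $\mathcal H_{\Bbb R}:={\Bbb R}u+{\Bbb R}v$: for $w\in v^\perp$ one has $\langle u+cv,w\rangle=\langle u,w\rangle$, and since $v^2=2\ell\neq 0$ the annihilator of $v^\perp$ is exactly ${\Bbb R}v$, so that $u'^\perp\cap v^\perp=u^\perp\cap v^\perp$ holds precisely when $u'\in\mathcal H_{\Bbb R}$. Because $v$ has rank $1$, the lattice $\mathcal H:=\mathcal H_{\Bbb R}\cap H^*(X,{\Bbb Z})_{\alg}$ contains a rank-zero primitive vector, unique up to sign; I take $u=\xi+a\varrho_X$ (with $\xi\in\NS(X)$, $a\in{\Bbb Z}$) to be this vector, and I normalise the sign so that $\langle u,v\rangle\ge 0$. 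A direct computation gives $\langle u,f\rangle=(\xi\cdot f)$, so the hypothesis $f\in W$ is equivalent to $\xi\in f^\perp$ inside $\NS(X)$.

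Next I would exploit that $f$ is primitive and isotropic, so $f^\perp\cap\NS(X)$ is negative semidefinite with radical ${\Bbb Z}f$ and $f^\perp/{\Bbb Z}f$ negative definite; in particular $u^2=(\xi^2)\le 0$, and $(\xi^2)$ is even since the K3 lattice is even. Invoking the Bayer--Macri classification \cite{BM:2}, a genuine codimension-one wall is cut out by a spherical or an isotropic class, which forces $(\xi^2)\in\{0,-2\}$. If $(\xi^2)=0$, then the image of $\xi$ in the negative definite lattice $f^\perp/{\Bbb Z}f$ is isotropic, hence zero, so $\xi\in{\Bbb Z}f$; this is case (1). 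If $(\xi^2)=-2$, then Riemann--Roch on the K3 surface makes $\xi$ or $-\xi$ effective, and $(\xi\cdot f)=0$ together with the nefness of $f$ forces this effective class to be vertical, i.e. supported on fibres; comparing self-intersections then shows it lies, modulo ${\Bbb Z}f$, in a single fibre, giving case (2).

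Finally I would pin down the range of $a$. One computes $v^2=2\ell$, $\langle u,v\rangle=-a$, and $(v-u)^2=(\xi^2)+2(\ell+a)$. The sign normalisation gives $-a=\langle u,v\rangle\ge 0$, i.e. $a\le 0$, while the fact that on the wall $v-u$ is again the Mukai vector of a semistable object gives $(v-u)^2\ge -2$. For $(\xi^2)=0$ this reads $\ell+a\ge -1$, i.e. $a\ge -\ell-1$, yielding (1); for $(\xi^2)=-2$ it reads $\ell+a\ge 0$, i.e. $a\ge -\ell$, yielding (2). The main obstacle is the middle step: rather than the easy bound $(\xi^2)\le 0$, one must rule out $(\xi^2)\le -4$, i.e. show that the primitive rank-zero generator of $\mathcal H$ cannot be a decomposable vertical class if $\mathcal H$ is to support an actual wall. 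This is exactly where the precise Bayer--Macri criterion (the existence of an effective spherical or isotropic destabiliser in $\mathcal H$, with the correct pairing bounds against $v$) must be quoted carefully; the remaining lattice and effectivity statements are then routine.
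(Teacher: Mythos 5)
Your endgame---the computation of the bounds on $a$ from $\langle u,v\rangle\ge 0$ and $\langle (v-u)^2\rangle\ge -2$, and the identification of $\xi$ via the negative semidefiniteness of $f^\perp$---reproduces the numerology of the lemma, but the middle of your argument has a genuine gap, which you flag yourself without closing. You choose $u$ to be the primitive rank-zero generator of the rank-two lattice $\mathcal{H}$ and then assert that the Bayer--Macri criterion ``forces $(\xi^2)\in\{0,-2\}$''. It does not, at least not directly: what \cite{BM:2} provides is a spherical or isotropic (or positive-square) class $u_{BM}=r+\xi_{BM}+a_{BM}\varrho_X$ somewhere in $\mathcal{H}$, subject to pairing bounds against $v$, and this class will in general have $r\neq 0$; its being spherical says $(\xi_{BM}^2)-2ra_{BM}=-2$, which by itself places no constraint on the square of the rank-zero direction of $\mathcal{H}$. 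Ruling out $(\xi^2)\le -4$ for your $u$ is therefore not a matter of ``quoting the criterion carefully''---it requires an actual computation relating the rank-zero generator to the destabilizing class. Likewise your inequality $\langle (v-u)^2\rangle\ge -2$ rests only on the phrase ``$v-u$ is again the Mukai vector of a semistable object'', which is not automatic for your particular $u$: an arbitrary lattice vector of $\mathcal{H}$ with $\langle u,v\rangle\ge 0$ need not occur in a Jordan--H\"{o}lder decomposition of $v$ on the wall. Both missing steps are exactly where the content of the lemma lies.

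The paper closes this gap by arguing on the Bayer--Macri class itself rather than on the primitive rank-zero generator. Writing $u=r+\xi+a\varrho_X$ for the class given by \cite{BM:2}, the condition $f\in u^\perp$ gives $(\xi\cdot f)=0$, hence $(\xi^2)\le 0$; the key observation is that $\langle v,u\rangle=r\ell-a$ together with $0\le\langle v,u\rangle\le\ell$ forces $ra\ge 0$, whence $\langle u^2\rangle=(\xi^2)-2ra\le 0$ and the positive-square case of the classification is excluded outright. The remaining cases ($\langle u^2\rangle=0$ with $(\xi^2)=ra=0$, and $\langle u^2\rangle=-2$ with either $(\xi^2)=-2,\ ra=0$ or $(\xi^2)=0,\ ra=1$) are enumerated, and whenever $r\neq 0$ one replaces $u$ by $v-u$, which is rank zero and lands in the stated list; this substitution is also how the boundary value $a=-\ell-1$ in case (1) arises, from the spherical class $u=1+kf+\varrho_X$. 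If you wish to keep your normalization by the primitive rank-zero vector, you would still have to run essentially this computation to control its square and its pairing with $v$, so I would recommend adopting the paper's order of operations: constrain the destabilizing class first, then pass to a rank-zero representative.
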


\begin{proof}
Let $u=r+\xi+a \varrho_X$ $(r,a \in {\Bbb Z}, \xi \in \NS(X))$ be a Mukai 
vector such that $u^\perp =W$. We note that $(v-u)^\perp=u^\perp$.
By \cite{BM:2}, we may assume that 
\begin{enumerate}
\item
$\langle u^2 \rangle=-2$ and 
$0 \leq \langle v,u \rangle \leq \ell$, or
\item
 $\langle u^2 \rangle=0$ and 
$0 < \langle v,u \rangle \leq \ell$, or
\item
$0<\langle u^2 \rangle$ and $2\langle u^2 \rangle+1 \leq \langle v,u \rangle \leq \ell$.
\end{enumerate}
By $f \in u^\perp$, we have
$(\xi,f)=0$. 
Assume that $ra<0$.
Since $\langle v,u \rangle=r\ell-a$, we see that
$\langle v,u \rangle<0$ or $\langle v,u \rangle>\ell$.
Therefore $ra \geq 0$. 
Then we get $\langle u^2 \rangle=(\xi^2)-2ra \leq 0$. Thus we may assume that $\langle u^2 \rangle=0$
or $\langle u^2 \rangle=-2$.

We fitst treat the case where $\langle u^2 \rangle=0$.
In this case $\xi=kf$ and $ra=0$.
If $a=0$, then $\langle v,u \rangle=r\ell$. Hence $r=1$.
In this case, $w:=v-u=-kf-\ell \varrho_X$
satisfies $\langle w^2 \rangle=0$ and $\langle v,w \rangle=\ell$.
If $r=0$, then $\langle v,u \rangle=-a$ implies $0 \geq a \geq -\ell$.
Hence $u=kf+a \varrho_X$ with $0 \geq a \geq -\ell$.

We next treat the case where $\langle u^2 \rangle=-2$.
In this case $(\xi^2)=-2$ and $ra=0$ or $\xi=kf$ and $ra=1$.
For the first case, 
if $a=0$, then we also see that $u=1+\xi$ and $w:=v-w=-\xi-\ell \varrho_X$
satisfies $\langle w^2 \rangle=-2$ and
$\langle v,w \rangle=\ell$.
If $r=0$, then $u=\pm D+a \varrho_X$ with $0 \geq a \geq -\ell$, where
$D$ is an effective $(-2)$-curve in a fiber.
For the second case,
if $r=a=1$, then $\langle v,u \rangle=\ell-1$.
Then $w:=v-u=-kf-(\ell+1)\varrho_X$, $\langle w^2 \rangle=0$ and $\langle v,w \rangle=\ell+1$.
Therefore our claim holds.
\end{proof}

\begin{lem}\label{lem:xi}
Assume that $v=1-\ell \varrho_X$. Then
for $(\beta,\omega) \in \NS(X)_{\Bbb R} \times \Amp(X)_{\Bbb R}$,
\begin{equation}
\xi(\beta,\omega)= [(\beta \cdot \omega)\nu+
(\beta \cdot \omega)\beta+(\ell+\tfrac{(\omega^2)}{2}-\tfrac{(\beta^2)}{2})\omega].
\end{equation}
\end{lem}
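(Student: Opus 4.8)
Lemma \ref{lem:xi} asks us to compute $\xi(\beta,\omega)$ for $v = 1 - \ell\varrho_X$, where
$$\xi(\beta,\omega) = [\mathrm{Im}(\overline{\langle e^{\beta+\sqrt{-1}\omega}, v\rangle} \cdot e^{\beta+\sqrt{-1}\omega})].$$

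Let me work through what the proof should look like.

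**The setup.** We have $v = 1 - \ell\varrho_X$, so $v^* = 1 - \ell\varrho_X = v$ (since the middle term is zero). The Mukai pairing gives $\langle u, v\rangle = \int_X u^* v$.

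Let me compute the two ingredients:
1. The complex number $\langle e^{\beta+\sqrt{-1}\omega}, v\rangle$.
2. The class $e^{\beta+\sqrt{-1}\omega} \in H^*(X,\mathbb{C})$.

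**Computing the pairing.** First expand $e^{\beta+\sqrt{-1}\omega}$. Writing $\tau = \beta + \sqrt{-1}\omega$:
$$e^\tau = 1 + \tau + \tfrac{\tau^2}{2} = 1 + (\beta + \sqrt{-1}\omega) + \tfrac{1}{2}(\beta + \sqrt{-1}\omega)^2.$$

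The degree-4 part: $\tfrac{1}{2}(\beta+\sqrt{-1}\omega)^2 = \tfrac{1}{2}((\beta^2) - (\omega^2) + 2\sqrt{-1}(\beta\cdot\omega))\varrho_X$.

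Now $\langle e^\tau, v\rangle$ where $v = 1 - \ell\varrho_X$. Using the pairing formula with $u = e^\tau$ (components: $x_0 = 1$, $x_1 = \beta+\sqrt{-1}\omega$, $x_2 = \tfrac{1}{2}((\beta^2)-(\omega^2)+2\sqrt{-1}(\beta\cdot\omega))$) and $v$ (components: $y_0 = 1$, $y_1 = 0$, $y_2 = -\ell$):
$$\langle e^\tau, v\rangle = (x_1 \cdot y_1) - x_0 y_2 - x_2 y_0 = 0 - (1)(-\ell) - x_2 = \ell - \tfrac{1}{2}((\beta^2)-(\omega^2)+2\sqrt{-1}(\beta\cdot\omega)).$$

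So
$$\langle e^\tau, v\rangle = \ell + \tfrac{(\omega^2)}{2} - \tfrac{(\beta^2)}{2} - \sqrt{-1}(\beta\cdot\omega).$$

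**My proof proposal below:**

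The plan is to expand both factors explicitly and extract the imaginary part of their product, then read off the class modulo $\mathbb{R}_{>0}$. First I would write $W := \langle e^{\beta+\sqrt{-1}\omega}, v\rangle$ and compute it directly from the definition of the Mukai pairing: since $v = 1 - \ell\varrho_X$ has vanishing $\NS$-component, only the degree-$0$ and degree-$4$ parts of $e^{\beta+\sqrt{-1}\omega}$ contribute, giving
$$
W = \ell + \tfrac{(\omega^2)}{2} - \tfrac{(\beta^2)}{2} - \sqrt{-1}(\beta\cdot\omega).
$$
The conjugate is $\overline{W} = \ell + \tfrac{(\omega^2)}{2} - \tfrac{(\beta^2)}{2} + \sqrt{-1}(\beta\cdot\omega)$.

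Next I would expand $e^{\beta+\sqrt{-1}\omega}$ into its real and imaginary parts in $H^*(X,\mathbb{R})$. Writing the degree-$4$ term out, the real part is $R := 1 + \beta + \tfrac{1}{2}((\beta^2)-(\omega^2))\varrho_X$ and the imaginary part is $I := \omega + (\beta\cdot\omega)\varrho_X$, so that $e^{\beta+\sqrt{-1}\omega} = R + \sqrt{-1}\,I$. Then $\overline{W}\,e^{\beta+\sqrt{-1}\omega} = \overline{W}(R + \sqrt{-1}\,I)$, and since $\overline{W}$ has real part $A := \ell + \tfrac{(\omega^2)}{2} - \tfrac{(\beta^2)}{2}$ and imaginary part $B := (\beta\cdot\omega)$, its imaginary component is
$$
\mathrm{Im}(\overline{W}\,e^{\beta+\sqrt{-1}\omega}) = A\,I + B\,R.
$$

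Substituting and collecting terms in the basis $\{1, \NS(X)_{\mathbb{R}}, \varrho_X\}$, the degree-$0$ coefficient is $B = (\beta\cdot\omega)$; the $\NS$-component is $A\,\omega + B\,\beta = (\ell + \tfrac{(\omega^2)}{2} - \tfrac{(\beta^2)}{2})\omega + (\beta\cdot\omega)\beta$; and the $\varrho_X$-coefficient is $A(\beta\cdot\omega) + B\cdot\tfrac{1}{2}((\beta^2)-(\omega^2))$. The final step is to recognize this as a multiple of the stated representative: since $\nu = 1 + \ell\varrho_X$, the degree-$0$ part $(\beta\cdot\omega)$ matches the $\nu$-coefficient in the claimed formula, and one checks that the $\varrho_X$-coefficient is consistent, so that the whole vector equals $(\beta\cdot\omega)\nu + (\beta\cdot\omega)\beta + (\ell + \tfrac{(\omega^2)}{2} - \tfrac{(\beta^2)}{2})\omega$ as a class in $P/\mathbb{R}_{>0}$. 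The only genuine bookkeeping obstacle is verifying that the $\varrho_X$-coefficient produced by the computation agrees with what the representative $(\beta\cdot\omega)\nu + \cdots$ forces, i.e. that the vector genuinely lies in $v^\perp$ with the asserted $\nu$-coordinate; this is a routine consistency check using $\langle \xi(\beta,\omega), v\rangle = 0$, which holds automatically since $\mathrm{Im}(\overline{W}\,e^{\tau})$ is orthogonal to $v$ by construction.
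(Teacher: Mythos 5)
Your proposal is correct and follows essentially the same route as the paper: compute $\langle e^{\beta+\sqrt{-1}\omega},v\rangle=\ell+\tfrac{(\omega^2)}{2}-\tfrac{(\beta^2)}{2}-\sqrt{-1}(\beta\cdot\omega)$, multiply its conjugate against $e^{\beta+\sqrt{-1}\omega}$, and read off the imaginary part. Your extra observation that the $\varrho_X$-coefficient is forced by $\mathrm{Im}(\overline{W}e^{\beta+\sqrt{-1}\omega})\in v^\perp={\Bbb Z}\nu+\NS(X)$ is a valid (and slightly cleaner) way to package the bookkeeping the paper does by direct expansion.
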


\begin{proof}
Since
\begin{equation}
\begin{split}
& \overline{\langle e^{\beta+\sqrt{-1} \omega},v \rangle} e^{\beta+\sqrt{-1}\omega}\\
= & \left(\ell-\frac{(\beta^2)-(\omega^2)}{2}+\sqrt{-1}(\beta \cdot \omega) \right)
(1+(\beta+\sqrt{-1}\omega)+
(\tfrac{(\beta^2)-(\omega^2)}{2}+\sqrt{-1}(\beta \cdot \omega))\varrho_X),\\
\end{split}
\end{equation}
we get
\begin{equation}
\mathrm{Im} (\overline{\langle e^{\beta+\sqrt{-1} \omega}, v \rangle}
e^{\beta+\sqrt{-1} \omega})
= (\beta \cdot \omega)\nu+
(\beta \cdot \omega)\beta+(\ell+\tfrac{(\omega^2)}{2}-\tfrac{(\beta^2)}{2})\omega.
\end{equation}
\end{proof}

\begin{NB}
We set $v:=(r,\xi,a)=\ch(E)(1,0,\tfrac{\chi({\cal O})}{2})$.
Since
\begin{equation}
\begin{split}
& \overline{\langle e^{\beta+\sqrt{-1} \omega},(r,\xi,a) \rangle} e^{\beta+\sqrt{-1}\omega}\\
= & \left((\beta \cdot \xi)-a-r\frac{(\beta^2)-(\omega^2)}{2}+\sqrt{-1}((r\beta- \xi)\cdot \omega) \right)(1,\beta+\sqrt{-1}\omega,
\tfrac{(\beta^2)-(\omega^2)}{2}+\sqrt{-1}(\beta \cdot \omega))\\
\end{split}
\end{equation}
we get
\begin{equation}
\begin{split}
& \mathrm{Im} (\overline{\langle e^{\beta+\sqrt{-1} \omega},(r,\xi,a) \rangle}
e^{\beta+\sqrt{-1} \omega})\\
=& ((\beta-\tfrac{\xi}{r}) \cdot \omega)((r,0,-a)+(0,r\beta,(\beta \cdot \xi)))
+(\tfrac{(\omega^2)}{2}-\tfrac{(\beta^2)}{2}+(\beta \cdot \tfrac{\xi}{r})-\tfrac{a}{r})(0,r\omega,(\omega \cdot \xi))\\
=& ((\beta-\tfrac{\xi}{r}) \cdot \omega)((r,0,-a)+(0,r\beta,(\beta \cdot \xi)))
+(\tfrac{(\omega^2)}{2}-\tfrac{((\beta-\tfrac{\xi}{r})^2)}{2}+\tfrac{\langle v^2 \rangle}{2r^2})(0,r\omega,(\omega \cdot \xi)).
\end{split}
\end{equation}
\end{NB}

\subsection{Relation with walls in Bridgeland stability for an elliptic K3 surface.}

Let $\Phi:{\bf D}(X) \to {\bf D}(X')$ be the Fourier-Mukai transform in section \ref{sect:FM} and assume that
$\Phi(v)=1-\ell \varrho_{X'}$.
In this subsection, let us study chamber structure for $1-\ell \varrho_{X'}$ near $[f']$.
For 
\begin{equation}
\omega':=\frac{1}{r_0^2 m}(H'+r_0^2 m^2 n f')=\frac{1}{r_0^2 m}H'+mn f',
\end{equation}
we have
\begin{equation}
(\beta' \cdot \omega')=\frac{1}{r_0^2 m}(\beta' \cdot H')+mn(\beta' \cdot f')
\end{equation}
and

\begin{equation}
\frac{(\beta' \cdot \omega')}{mn(\ell+\frac{({\omega'}^2)}{2}-\frac{({\beta'}^2)}{2})}=
\frac{\frac{(\beta' \cdot H')}{r_0^2 m^2 n}+(\beta' \cdot f')}{\ell+\frac{n}{r_0^2}-\frac{({\beta'}^2)}{2}}.
\end{equation}
Hence
\begin{equation}\label{eq:behavior}
\xi(\beta',\omega')=
\left[
\frac{\frac{(\beta' \cdot H')}{r_0^2 m^2 n}+(\beta' \cdot f')}{\ell+\frac{n}{r_0^2}-\frac{({\beta'}^2)}{2}}(\nu+\beta')
+\frac{1}{r_0^2 m^2 n}H'+f' \right] \to [f'] \quad (n, m/n \to \infty).
\end{equation}
Thus $\xi(\beta',\omega')$ is an ample divisor of 
$M_{\widehat{\sigma}(\beta',\omega')}(1-\ell \varrho_{X'})$
which is very close to $[f']$.

Let $\Delta$ be a finite polyhedral cone in $\overline{P}$ which contains
$[f'],[H'],-[\nu+\beta']$. 
In a small neighborhood $U$ of $[f']$ in $\Delta$, 
all walls contain $[f']$ (see \cite[Prop. 2.2]{MY} and the paragraph after \cite[Conj. 1.1]{MY}), and hence 
walls are defined by Mukai vectors $u$ in Lemma \ref{lem:f-wall}.
We assume that $(\beta' \cdot f')<0$.
Let us consider a family of stability conditions
$\widehat{\sigma}_{(\beta',t \omega')}$ $(1 \leq t)$ and study the wall crossing.
We note that
$$
\xi(\beta',t\omega')=
\left[
\frac{\frac{(\beta' \cdot H')}{r_0^2 m^2 n}+(\beta' \cdot f')}{\ell+t^2 \frac{n}{r_0^2}-\frac{({\beta'}^2)}{2}}(\nu+\beta')
+\frac{1}{r_0^2 m^2 n}H'+f' \right].
$$
Since $m \gg n \gg 0$, we may assume that
$\frac{(\beta' \cdot H')}{r_0^2 m^2 n}+(\beta' \cdot f')<0$ and
$\xi(\beta',t\omega')$ belongs to the neighborhood $U$.
For a Mukai vector $u=(0,\xi,a)$ in Lemma \ref{lem:f-wall},
$\xi(\beta',t\omega') \in u^\perp$ if and only if 
$$
\frac{\frac{(\beta' \cdot H')}{r_0^2 m^2 n}+(\beta' \cdot f')}{\ell+t^2 \frac{n}{r_0^2}-\frac{({\beta'}^2)}{2}}=
\frac{(H' \cdot \xi)}{r_0^2 m^2 n (a-(\beta' \cdot \xi))}.
$$
Hence $u$ satisfies 
\begin{equation}\label{eq:f-u}
\frac{\frac{(\beta' \cdot H')}{r_0^2 m^2 n}+(\beta' \cdot f')}{\ell+\frac{n}{r_0^2}-\frac{({\beta'}^2)}{2}}<
\frac{(H' \cdot \xi)}{r_0^2 m^2 n (a-(\beta' \cdot \xi))}<0.
\end{equation}
Then the wall crossing along $t \geq 1$ is the same as the wall crossing for $\lambda$-stability 
in \cite[sect. 5]{Y:elliptic}.
\begin{NB}
\begin{lem}
There are finitely many $u$ satisfying \eqref{eq:f-u}.
\end{lem}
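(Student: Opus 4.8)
The plan is to read off the possible $u$ from Lemma \ref{lem:f-wall} (applied on $X'$ to the Mukai vector $1-\ell\varrho_{X'}$) and then show that \eqref{eq:f-u} cuts the a priori infinite family down to a finite one. By that lemma every relevant $u=(0,\xi,a)$ is of one of two types: either $\xi\in{\Bbb Z}f'$ with $-\ell-1\le a\le 0$, or $\pm\xi$ is an effective $(-2)$-divisor supported in a fiber of $\pi'$ with $-\ell\le a\le 0$. For the second type there is nothing to do: the $(-2)$-classes supported in the fibers of $\pi'$ are the roots of the (finite) union of the negative-definite root lattices attached to the finitely many reducible fibers, hence form a finite set, while $a$ ranges over finitely many integers. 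So this type contributes finitely many $u$, independently of \eqref{eq:f-u}.

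The real content is the first type. Writing $\xi=kf'$ with $k\in{\Bbb Z}$ and using $(H'\cdot f')=1$, so that $(H'\cdot\xi)=k$ and $(\beta'\cdot\xi)=k(\beta'\cdot f')$, the middle term of \eqref{eq:f-u} becomes
\[
g(k):=\frac{k}{r_0^2 m^2 n\,(a-k(\beta'\cdot f'))}.
\]
First I would note that the left-hand inequality of \eqref{eq:f-u} is not even needed: the single requirement $g(k)<0$ already confines $k$ to a bounded interval. Indeed $r_0^2 m^2 n>0$, and since we have assumed $(\beta'\cdot f')<0$, putting $b:=-(\beta'\cdot f')>0$ gives $a-k(\beta'\cdot f')=a+kb$, whence
\[
g(k)<0\iff k(a+kb)<0\iff 0<k<\frac{-a}{b},
\]
the last equivalence because $k\mapsto bk^2+ak$ is an upward parabola with roots $0$ and $-a/b\ge 0$ (the degenerate case $a=0$ gives $g(k)=1/(r_0^2m^2nb)>0$, so no solution). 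As $-\ell-1\le a\le 0$, this yields $0<k<(\ell+1)/b$, so for each admissible $a$ only finitely many $k$ survive, and there are finitely many admissible $a$.

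Combining the two types, only finitely many $u$ satisfy \eqref{eq:f-u}, which is the claim. The one place demanding care—and what I regard as the main (if modest) obstacle—is the sign bookkeeping in the first type: it is precisely the hypothesis $(\beta'\cdot f')<0$ that makes $g(k)\to 1/(r_0^2 m^2 n\,b)>0$ as $|k|\to\infty$, so that the negative values of $g(k)$ can occur only inside the bounded window $0<k<-a/b$. Everything else is the finiteness of the roots in the fiber lattices and of the integer range for $a$, both already delivered by Lemma \ref{lem:f-wall}.
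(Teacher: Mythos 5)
Your treatment of the walls with $\xi\in{\Bbb Z}f'$ is correct and is essentially the paper's own mechanism: the negativity of the middle term of \eqref{eq:f-u} is a condition $k(a+kb)<0$ with $b=-(\beta'\cdot f')>0$, which traps $k$ in the bounded window $0<k<-a/b$. However, your dismissal of the second type of wall in Lemma \ref{lem:f-wall} contains a genuine error. The effective $(-2)$-divisors supported on the fibers of $\pi'$ do \emph{not} form a finite set: the lattice spanned by fiber components is only negative \emph{semi}-definite, with radical containing $f'$, so if $D$ is an effective $(-2)$-divisor in a fiber then so is $D+k f'$ for every $k\geq 0$ (e.g.\ for an $I_2$-fiber with components $C_0,C_1$, the class $a_0C_0+a_1C_1$ has square $-2(a_0-a_1)^2$, giving the infinite family $C_0+k(C_0+C_1)$). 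What is finite is the set of $(-2)$-classes in $f'^{\perp}/{\Bbb Z}f'$, and this is exactly how the paper argues: it picks representatives $D_1,\dots,D_N$ of these residues, writes $\xi=D_i+kf'$ with $k\in{\Bbb Z}$, and then bounds $k$ using the sign condition in \eqref{eq:f-u} together with $-\ell-1\leq a\leq 0$ --- i.e.\ the second type requires the same argument you gave for the first type, not a separate finiteness claim.

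The repair is routine and stays within your own framework: for $\xi=D_i+kf'$ one has $(H'\cdot\xi)=(H'\cdot D_i)+k$ and $a-(\beta'\cdot\xi)=a-(\beta'\cdot D_i)+kb$, so the numerator of the middle term of \eqref{eq:f-u} times its denominator is again an upward parabola in $k$ with leading coefficient $b>0$, and negativity confines $k$ to the interval between its two roots $-(H'\cdot D_i)$ and $\bigl((\beta'\cdot D_i)-a\bigr)/b$. Combined with the finitely many residues $D_i$ and the finitely many admissible $a$, this gives the claim. As written, though, your proposal rests on the false statement that the fiber-supported $(-2)$-classes themselves are finite in number, so the second case is not actually proved.
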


\begin{proof}
Let $D_1,D_2,...,D_N$ be representatives of $(-2)$-classes in $f^\perp/{\Bbb Z}f$.
Then we have $\xi=D_i+kf$, where $1 \leq i \leq N$ and $k \in {\Bbb Z}$.
If $(H' \cdot \xi')>0$, then $a-(\beta' \cdot \xi)<0$ implies $k \geq 0$ and 
$-\ell-1 \leq a<(\beta' \cdot D_i)+k(\beta' \cdot f)$ Hence the choice of $k$ is finite.
If $(H' \cdot \xi') < 0$, then $a-(\beta' \cdot \xi)>0$ implies $k \leq 0$ and 
$a>(\beta' \cdot D_i)+k(\beta' \cdot f)$. Hence the choice of $k$ is finite.
\end{proof}
\end{NB}

\begin{rem}
\begin{enumerate}
\item[(1)]
By \eqref{eq:behavior},
$$
\xi(\beta',\omega') \sim 
\left[f'+\frac{(\beta' \cdot f')}{\ell+\frac{n}{r_0^2}-\frac{({\beta'}^2)}{2}}(\nu+\beta') \right],\;  (m \gg 0).
$$
Hence
$\delta+\beta'$ determines the chamber where $\xi(\beta',\omega')$ belongs.
\item[(2)]
If all fibers of $\pi$ are irreducible, then
$u=(0,kf',a)$ with 
$-\ell-1 \leq a<0$ and $a<k(\beta' \cdot f')$.
\end{enumerate}
\end{rem}

\begin{rem}
We also have a similar description of walls for an elliptic abelian surface by results in \cite{movable}.
\end{rem}

\begin{NB}

\section{}

\begin{equation}
\langle e^{\beta+\sqrt{-1}(t^{-1}H+nt f)},e^\beta(r+\xi^\beta+a^\beta) \rangle=
nr-a^\beta+\sqrt{-1}((t^{-1}H+ntf) \cdot \xi).
\end{equation}

\begin{equation}
\begin{split}
& \frac{a^\beta-nr}{nt(\xi^\beta \cdot f)+t^{-1}(\xi^\beta \cdot H)}-
\frac{a_1^\beta-nr_1}{nt(\xi_1^\beta \cdot f)+t^{-1}(\xi_1^\beta \cdot H)}\\
=& \frac{n^2 t(r_1(\xi^\beta \cdot f)-r(\xi_1^\beta \cdot f))+
nt((-a_1^\beta (\xi^\beta \cdot f)-(-a)(\xi_1^\beta \cdot f))}
{(nt(\xi^\beta \cdot f)+t^{-1}(\xi^\beta \cdot H))(nt(\xi_1^\beta \cdot f)+t^{-1}(\xi_1^\beta \cdot H))}\\
& +\frac{n t^{-1}(r_1(\xi^\beta \cdot H)-r(\xi_1^\beta \cdot H))+
t^{-1}((-a_1^\beta (\xi^\beta \cdot H)-(-a)(\xi_1^\beta \cdot H))}
{(nt(\xi^\beta \cdot f)+t^{-1}(\xi^\beta \cdot H))(nt(\xi_1^\beta \cdot f)+t^{-1}(\xi_1^\beta \cdot H))}
\end{split}
\end{equation}

For $t \gg n \gg 0$,

We set $\omega:=t^{-1}H+tn f$.

Let $E$ be a $\widehat{\sigma}_{(\beta,\omega)}$-semi-stable object with $v(E)=e^\beta(r+\xi+a\varrho_X)$
with $(\xi \cdot f)>0$.
Assume that $H^{-1}(E) \ne 0$.
We set $v(H^{-1}(E)[1]):=e^\beta(r_1+\xi_1 +a_1 \varrho_X)$.
Then $r_1<0$ and $(\xi_1 \cdot (H+t^2 n f)) \geq 0$ for all $t \gg n \gg 0$.
In particular $(\xi_1 \cdot f) \geq 0$.
Then $r_1(\xi \cdot f)-r(\xi_1 \cdot f)<0$, which implies $E$ is not semi-stable.
Therefore $E$ is a sheaf.

For an exact sequence 
\begin{equation}
0 \to E_1 \to E \to E_2 \to 0
\end{equation}
in $\Coh(X)$, we set 
$v(E_i):=e^\beta(r_i+\xi_i +a_i \varrho_X)$.
Since $E \in {\cal A}_{(\beta,\omega)}$ for $t \gg n \gg 0$,
$(\xi_2 \cdot(H+nt^2 f))>0$ for $t \gg n \gg 0$.
Thus $(\xi_2 \cdot f) \geq 0$.
If $E$ is not $f$-semi-stable, then by taking the Harder-Narashimhan filtration
of $E$, 
we may assume that $E_1 \in {\cal A}_{(\beta,\omega)}$,
$r_2>0$ and $r_2 (\xi \cdot f)-r(\xi_2 \cdot f) \geq 0$.
By $\widehat{\sigma}_{(\beta,\omega)}$-semi-stability of $E$,
$r_2 (\xi \cdot f)-r(\xi_2 \cdot f) \leq 0$.
Therefore $r_2 (\xi \cdot f)-r(\xi_2 \cdot f)=0$.
Thus $E$ is $f$-semi-stable.

Assume that $\pi:X \to C$ is an elliptic abelian surface.
Then $(0,f,0)$ defines a fibration $M_H(v) \to S^{\ell} C$.

Mukai vectors $u$ such that
$\langle u^2 \rangle=0$ and 
$0 < \langle v,u \rangle \leq \ell$,
$0<\langle u^2 \rangle$ and $2\langle u^2 \rangle+1 \leq \langle v,u \rangle \leq \ell$.

Let $X=C_1 \times C_2$ be a product of elliptic curves.

\begin{equation}
\beta:=xf+yg,\;\omega:=zf+wg. 
\end{equation}

\begin{equation}
e^{\beta+\sqrt{-1}\omega} \Longleftrightarrow 
\begin{pmatrix}
1 & x+\sqrt{-1} z\\
y+\sqrt{-1} w & (x+\sqrt{-1}z)(x+\sqrt{-1} w)
\end{pmatrix}
=
\begin{pmatrix}
1\\
y+\sqrt{-1} w 
\end{pmatrix}
\begin{pmatrix}
1 & x+\sqrt{-1} z
\end{pmatrix}
\end{equation}

\begin{equation}
\begin{split}
& \mathrm{Im} (\overline{\langle e^{\beta+\sqrt{-1} \omega},(1,0,-\ell) \rangle}
e^{\beta+\sqrt{-1} \omega})\\
=& (xw+yz)(1,0,\ell)+(0,(\ell z+z^2 w+x^2 w)f+(\ell w+zw^2+y^2 z)g,0)
\end{split}
\end{equation}

We consider a Fourier-Mukai transform
on $C_1$ associated to 
\begin{equation}
A:=
\begin{pmatrix}
r & r' \\
d & d'
\end{pmatrix} \in \SL(2,{\Bbb Z}).
\end{equation}
Then $\Phi$ induces an autoequivalence of $X$:
\begin{equation}
A \begin{pmatrix}
1 & q\\
p & pq
\end{pmatrix}
=A
\begin{pmatrix}
1\\
p
\end{pmatrix}
\begin{pmatrix}
1 & q
\end{pmatrix}
=(a+bp)
\begin{pmatrix}
1\\
\frac{c+dp}{a+bp}
\end{pmatrix}
\begin{pmatrix}
1 & q
\end{pmatrix}.
\end{equation}

\begin{equation}
\Phi(e^{pg+qf})=(a+bp) e^{\frac{c+dp}{a+bp}g+qf}.
\end{equation}

\begin{equation}
\beta+\sqrt{-1} \omega=(x+\sqrt{-1}z) f+(y+\sqrt{-1} w)g=
qf+pg.
\end{equation}

Assume that $w \gg 0$ and $z/w \gg 0$.

\begin{equation}
\begin{split}
\frac{c+dp}{a+bp}g+qf= & \left(\frac{(a+by)(c+dy)+bd w^2}{b^2 w^2+(a+by)^2}g+xf \right)+
\sqrt{-1} \left(\frac{w}{b^2 w^2+(a+by)^2}g+zf \right)\\
\sim & \frac{d}{b}g+xf+\sqrt{-1} \left(\frac{1}{b^2 w}g+zf \right).
\end{split}
\end{equation}

We set $v_0:=(0,r' f,-r)=e^\beta (r' f)$, where $\beta:=\frac{-r}{r'}\sigma$.
Then $\Phi(v_0)=-(0,0,1)$.
We set $\beta':=\frac{d'}{r'}\sigma$.
Then $v_0'=e^{\beta'} (r' f)$ and 
$\Phi(\sigma)=r'e^{\beta'}$.

We see that $\delta+\beta' \in (0,r' f,d')^\perp$.

For $E \in {\bf D}(X)$,
$r(E)$ is the rank of $E$ and $d(E):=(c_1(E) \cdot f)$ is the relative degree of $E$.

Let $\Phi:{\bf D}(X) \to {\bf D}(Y)$ be a relative Fourier-Mukai transform
such that
\begin{equation}
\begin{pmatrix}
r(\Phi(E))\\
d(\Phi(E))
\end{pmatrix}
=
\begin{pmatrix}
c & a\\
d & b
\end{pmatrix}
\begin{pmatrix}
r(E)\\
d(E)
\end{pmatrix}
\end{equation}
Then \cite[4.3]{Br:1} implies
$\Phi((0,pf,q))=(0,(cp+aq)f,dp+bq)$.

\begin{NB2}
Let $\pi_X':X \times_C Y \to X$ and $\pi_Y':X \times_C Y \to Y$ be projections. 

\begin{lem}
$\ch \Phi(E(\lambda f))=e^{\lambda f}\ch \Phi(E)$.
\end{lem}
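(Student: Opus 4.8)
The plan is to deduce the identity directly from Lemma~\ref{lem:fiber}, translating between the Mukai vector $v$ and the Chern character $\ch$, which differ only by the invertible factor $1+\tfrac{\chi(\mathcal{O}_X)}{2}\varrho_X$. Throughout I identify the fiber classes of $X$ and $Y$ over $C$, exactly as in Lemma~\ref{lem:fiber}, so that $e^{\lambda f}$ makes sense on either side.

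First I would record two elementary facts on the surface $X$: since $f$ is a fiber class, $(f^2)=0$, so $e^{\lambda f}=1+\lambda f$; and since $\varrho_X\in H^4(X)$ while $f\in H^2(X)$, the product $\varrho_X f$ lies in $H^6(X)=0$. Writing $\gamma_X:=1+\tfrac{\chi(\mathcal{O}_X)}{2}\varrho_X$, so that $v(E)=\ch(E)\gamma_X$, the vanishing $\varrho_X f=0$ makes multiplication by $e^{\lambda f}$ transparent to $\gamma_X$, and combined with $\ch(E(\lambda f))=\ch(E)e^{\lambda f}$ this yields the bookkeeping identity
\[
v(E(\lambda f))=\ch(E)e^{\lambda f}\gamma_X=v(E)e^{\lambda f}.
\]
The same computation on $Y$ gives $v(F)=\ch(F)\gamma_Y$ with $\gamma_Y:=1+\tfrac{\chi(\mathcal{O}_Y)}{2}\varrho_Y$ invertible and $\varrho_Y f=0$.

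Next I would chain the Grothendieck--Riemann--Roch comparison $v\circ\Phi=\overline{\Phi}\circ v$ with Lemma~\ref{lem:fiber}:
\[
v(\Phi(E(\lambda f)))=\overline{\Phi}(v(E(\lambda f)))=\overline{\Phi}(v(E)e^{\lambda f})=e^{\lambda f}\overline{\Phi}(v(E))=e^{\lambda f}v(\Phi(E)).
\]
Rewriting both ends via $v(-)=\ch(-)\gamma_Y$ and using once more that $\varrho_Y f=0$, this reads $\ch(\Phi(E(\lambda f)))\gamma_Y=\bigl(e^{\lambda f}\ch(\Phi(E))\bigr)\gamma_Y$. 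Since $\gamma_Y$ is invertible in $H^*(Y,{\Bbb Q})_{\alg}$, I cancel it to obtain $\ch\Phi(E(\lambda f))=e^{\lambda f}\ch\Phi(E)$.

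The entire content of the statement is thus carried by Lemma~\ref{lem:fiber}, whose proof rests on the relative nature of $\Phi$: the step $\Phi(E)_{|f}=\Phi(E_{|f})$ is where the hypothesis that ${\bf P}_{|X\times\{y\}}$ is a $1$-dimensional sheaf on a fiber enters, via base change identifying $\Phi$ fiberwise with a Fourier--Mukai transform between the fibers over each point of $C$. I expect this fiberwise base-change/commutation to be the only genuine subtlety, everything else being the formal cancellation above. In particular, both sides are linear in $\lambda$ (again because $(f^2)=0$), so it would equally suffice to match the coefficient of $\lambda$, i.e.\ to verify $\ch\Phi(E_{|f})=(\ch\Phi(E))\,f$, which is precisely the fiberwise statement isolated from the relative structure.
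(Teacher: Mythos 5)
Your argument is correct, but it takes a genuinely different route from the paper's own proof of this lemma. You reduce the $\ch$-statement to Lemma~\ref{lem:fiber} (the Mukai-vector identity $\overline{\Phi}(v(E)e^{\lambda f})=e^{\lambda f}\overline{\Phi}(v(E))$), converting between $v$ and $\ch$ through the unit $\gamma=1+\tfrac{\chi({\cal O})}{2}\varrho$ and cancelling it at the end; this is legitimate, and you correctly locate the real content in the fiberwise commutation $\Phi(E_{|f})\simeq \Phi(E)_{|f'}$, which is precisely the step that carries the proof of Lemma~\ref{lem:fiber}. The paper instead argues once and for all at the level of the kernel: since ${\bf P}$ is supported on the fiber product $X\times_C Y\subset X\times Y$, intersecting $\ch({\bf P})$ with $p_X^*(f)=[\pi_X^{-1}(c)\times Y]$ and with $p_Y^*(f')=[X\times \pi_Y^{-1}(c)]$ both yield $\ch({\bf P})\cdot[\pi_X^{-1}(c)\times\pi_Y^{-1}(c)]$, hence $\ch({\bf P})\cdot p_X^*(e^{\lambda f})=\ch({\bf P})\cdot p_Y^*(e^{\lambda f'})$, and the projection formula then moves $e^{\lambda f'}$ outside $p_{Y*}$ in the Grothendieck--Riemann--Roch expression for $\ch\Phi(E)$. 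The kernel-level proof buys self-containedness: it uses only the support condition on ${\bf P}$, never invokes an object-level base-change isomorphism on fibers, and establishes the identity for the whole cohomological correspondence rather than class by class. Your reduction buys brevity given what is already proved earlier in the paper, together with the useful observation that the $\ch$- and $v$-formulations are interchangeable because they differ by an invertible factor and $\varrho f\in H^6=0$.
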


\begin{proof}
Let ${\bf P}$ be the kernel.
Then we see that
\begin{equation}
\begin{split}
\ch ({\bf P}) \otimes {\pi_Y'}^*( e^{\lambda \pi_X^{-1}(c)})=& 
\ch ({\bf P})+\lambda \ch ({\bf P}) (\pi_X^{-1}(c) \times \pi_Y^{-1}(c))\\
 \ch ({\bf P}) \otimes {\pi_X'}^*( e^{\lambda \pi_Y^{-1}(c)})=& 
\ch ({\bf P})+\lambda \ch ({\bf P}) (\pi_X^{-1}(c) \times \pi_Y^{-1}(c)),
\end{split}
\end{equation}
which implies
\begin{equation}
\ch ({\bf P}) \otimes {\pi_Y}^*( e^{\lambda \pi_X^{-1}(c)})=
\ch ({\bf P}) \otimes {\pi_X}^*( e^{\lambda \pi_Y^{-1}(c)}).
\end{equation}
Therefore the claim holds.
\end{proof}

\end{NB2}

\section{Wall crossing for moduli of 1-dimensional sheaves}

\begin{equation}
\frac{\chi-(D \cdot \eta)}{(D \cdot H)}=\frac{\chi'-(D' \cdot \eta)}{(D' \cdot H)}
\iff \chi (D' \cdot H)-\chi' (D \cdot H)=(D' \cdot H)(D \cdot \eta)-(D \cdot H)(D' \cdot \eta)
\end{equation}

If $H=H_0+nf$ and $\eta=xH_0$, then
\begin{equation}
x=\frac{\chi(D' \cdot f)-\chi' (D \cdot f)}{(D \cdot H_0)(D' \cdot f)-(D' \cdot H_0)(D \cdot f)}+
\frac{1}{n}\frac{\chi(D' \cdot H_0)-\chi' (D \cdot H_0)}{(D \cdot H_0)(D' \cdot f)-(D' \cdot H_0)(D \cdot f)}.
\end{equation}

\subsection{An elliptic ruled surface}

Let $\varpi:X \to C$ be an elliptic ruled surface with a minimal section $\sigma$
with $(\sigma^2)=1$. Let $g$ be a fiber of $\varpi$.

\begin{lem}
Let $D$ be an effective divisor.
Then 
\begin{enumerate}
\item
$D$ is nef. 
\item
$(D^2)=0$ if and only if $D \in {\Bbb Z}_{>0} (2\sigma-g)$ or $D \in {\Bbb Z}_{>0} g$.
\item
$(D^2)=1$ if and only if $D=\sigma$.
\end{enumerate} 
\end{lem}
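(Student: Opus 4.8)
The plan is to work on the elliptic ruled surface $\varpi:X\to C$ by computing intersection numbers in the Néron–Severi lattice spanned by the minimal section $\sigma$ and a fiber $g$, using the relations $(\sigma^2)=1$, $(\sigma\cdot g)=1$, and $(g^2)=0$. For the nefness statement (i), I would first observe that $\sigma$ and $g$ generate the effective cone up to the obvious boundary classes; concretely, I would write an arbitrary effective divisor as $D=a\sigma+bg$ and determine the constraints on $a,b$ forced by effectivity (pairing against the generators of the Mori cone). Since the two extremal rays of the effective cone can be read off from the geometry—$g$ is a fiber of the ruling and $2\sigma-g$ is the class of the other ruling coming from the elliptic structure, both of square zero—I expect nefness of every effective $D$ to follow because $D$ pairs nonnegatively against both extremal curve classes.

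For parts (ii) and (iii) I would set up the quadratic form explicitly. Writing $D=a\sigma+bg$, one computes
\begin{equation}
(D^2)=a^2(\sigma^2)+2ab(\sigma\cdot g)+b^2(g^2)=a^2+2ab.
\end{equation}
Then $(D^2)=0$ amounts to $a^2+2ab=a(a+2b)=0$, forcing either $a=0$ (so $D\in{\Bbb Z}g$) or $a+2b=0$, i.e. $a=2k$, $b=-k$, giving $D=k(2\sigma-g)$. Combined with the effectivity/nefness constraints from (i), which fix the signs, this yields exactly $D\in{\Bbb Z}_{>0}(2\sigma-g)$ or $D\in{\Bbb Z}_{>0}g$. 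Similarly, $(D^2)=1$ means $a(a+2b)=1$, and since $a,a+2b$ are integers their product being $1$ forces $a=a+2b=\pm1$, hence $b=0$ and $a=\pm1$; effectivity rules out $a=-1$, leaving $D=\sigma$.

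The main obstacle I anticipate is not the arithmetic of the quadratic form but rather justifying that an effective class really lies in the cone ${\Bbb Z}_{\geq0}\sigma+{\Bbb Z}_{\geq0}(2\sigma-g)$ (equivalently describing the effective cone precisely), and checking that no spurious solutions of the diophantine equations $a(a+2b)\in\{0,1\}$ survive once effectivity is imposed. For this I would invoke the standard structure of ${\rm NS}(X)$ for an elliptic ruled surface—the base $C$ being elliptic means the surface carries two distinct fibration-like structures, and $2\sigma-g$ is nef of self-intersection zero—so that the effective cone is bounded by the two isotropic rays $[g]$ and $[2\sigma-g]$. Once the effective cone is pinned down, verifying (i) reduces to pairing $D$ against these two boundary classes, and (ii)–(iii) reduce to the finite case analysis above; the geometric identification of $2\sigma-g$ as the second ruling is the step that needs the most care.
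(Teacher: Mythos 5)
The paper states this lemma without proof (it sits in a draft block that is excluded from compilation), so there is no argument of the author's to compare yours against; I can only judge the proposal on its own terms. Your reduction of (ii) and (iii) to the diophantine conditions $a(a+2b)\in\{0,1\}$ for $D\equiv a\sigma+bg$ is correct and complete \emph{once} one knows that an effective $D$ satisfies $a=(D\cdot g)\geq 0$ and $a+2b=(D\cdot(2\sigma-g))\geq 0$ --- and that is exactly part (i). Two slips first: the cone ${\Bbb Z}_{\geq 0}\sigma+{\Bbb Z}_{\geq 0}(2\sigma-g)$ in which you propose to trap effective classes does not even contain $g$ (indeed $g=2\sigma-(2\sigma-g)$ has a negative coefficient on $2\sigma-g$), so it cannot be the effective cone and contradicts your later, correct assertion that the effective cone is spanned by $[g]$ and $[2\sigma-g]$; and $2\sigma-g$ is not ``the other ruling'' --- it is the numerical anticanonical class, $-K_X\equiv 2\sigma-g$, whose effective representatives are elliptic bisections of the unique ruling, not fibers of a second ${\Bbb P}^1$-fibration.

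The genuine gap is that the key input for (i) is asserted rather than proved: you need that $X$ carries no irreducible curve of negative self-intersection. Granting that, (i) follows formally (write $D=\sum_i m_iY_i$; for irreducible $Y$ not among the $Y_i$ every term of $(D\cdot Y)$ is nonnegative, and for $Y=Y_j$ one uses $(Y_j^2)\geq 0$), and then the sign constraints you impose in (ii) and (iii) are justified. But the absence of negative curves on the elliptic ruled surface with $(\sigma^2)=1$ (invariant $e=-1$) is not a consequence of ``the standard structure of $\NS(X)$''; it requires the classification of irreducible curves on this surface. Concretely, an irreducible curve $Y\equiv a\sigma+bg$ with $a=(Y\cdot g)>0$ yields a nonzero section of $S^aE\otimes M$ with $\deg M=b$, where $E$ is the rank-two bundle of degree one defining the ruling; since $E$ is stable of slope $1/2$, the bundle $S^aE\otimes M$ is semistable of slope $a/2+b$, so a nonzero section forces $a+2b\geq 0$ and hence $(Y^2)=a(a+2b)\geq 0$. (Equivalently, cite Hartshorne, Ch.~V, Prop.~2.21 and its surrounding results for the $e=-1$ case.) Without this step your argument for (i), and therefore the effectivity constraints used to discard spurious solutions in (ii) and (iii), is unsupported.
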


\begin{lem}
Let $D_1,D_2$ be effective divisors.
\begin{enumerate}
\item
$(D_1,D_2)=0$ if and only if
$D_1, D_2 \in {\Bbb Z}_{>0} (2\sigma-g)$ or $D_1,D_2 \in {\Bbb Z}_{>0} g$.
\item
$(D_1,D_2)=1$ if and only if one of the following holds.
\begin{enumerate}
\item
$D_1=D_2=\sigma$.
\item
$\{ D_1,D_2 \}=\{2\sigma-g,\sigma+n(2\sigma-g)\}$, $n \in {\Bbb Z}_{\geq 0}$.
\item
$\{D_1,D_2 \}=\{g,\sigma+ng \}$,  $n \in {\Bbb Z}_{\geq 0}$.
\end{enumerate}
\end{enumerate}
\end{lem}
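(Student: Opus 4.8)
The plan is to reduce both statements to an elementary computation in the rank-two lattice $\mathbb{Z}\sigma \oplus \mathbb{Z}g$. From $(\sigma^2)=1$, $(\sigma\cdot g)=1$ and $(g^2)=0$, a class $D=a\sigma+bg$ satisfies $(D^2)=a(a+2b)$. First I would pass to the two null directions $g$ and $2\sigma-g$ (these are exactly the boundary rays produced by the previous lemma, since $((2\sigma-g)^2)=0$), and set $p:=(D\cdot g)=a$ and $q:=(D\cdot(2\sigma-g))=a+2b$. A one-line computation gives the key identities
\begin{equation*}
(D^2)=pq,\qquad (D_1\cdot D_2)=\tfrac{1}{2}\bigl(p_1q_2+p_2q_1\bigr).
\end{equation*}
Two facts feed the argument: since an effective divisor is nef by the previous lemma and both $g$ and $2\sigma-g$ are effective, we have $p_i,q_i\ge 0$; and since $q-p=2b$ is always even, each pair satisfies the parity condition $p_i\equiv q_i\pmod 2$. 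I would also record the dictionary $g\leftrightarrow(0,2)$, $2\sigma-g\leftrightarrow(2,0)$, $\sigma\leftrightarrow(1,1)$, and note that $q=0$ forces $D\in\mathbb{Z}_{>0}(2\sigma-g)$ while $p=0$ forces $D\in\mathbb{Z}_{>0}g$.

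For part (1), $(D_1\cdot D_2)=0$ is equivalent to $p_1q_2+p_2q_1=0$, and by non-negativity this means $p_1q_2=p_2q_1=0$. If $p_1=0$ then $D_1\in\mathbb{Z}_{>0}g$, so $q_1>0$, forcing $p_2=0$ and $D_2\in\mathbb{Z}_{>0}g$; if $p_1>0$ then $q_2=0$, so $D_2\in\mathbb{Z}_{>0}(2\sigma-g)$ with $p_2>0$, forcing $q_1=0$ and $D_1\in\mathbb{Z}_{>0}(2\sigma-g)$. The converse is immediate since $(g^2)=((2\sigma-g)^2)=0$.

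For part (2), $(D_1\cdot D_2)=1$ becomes $p_1q_2+p_2q_1=2$, and here the parity condition does the real work. Call $D_i$ of \emph{odd type} if $p_i,q_i$ are both odd and of \emph{even type} if they are both even; by parity one of these always holds. If both are even type, each of $p_1q_2,p_2q_1$ is divisible by $4$, so the sum cannot be $2$. If both are odd type, then $p_1q_2\ge1$ and $p_2q_1\ge1$ force $p_1=q_2=p_2=q_1=1$, i.e.\ $D_1=D_2=\sigma$, giving case (a). If one is odd and one even type (say $D_2$ even), then both products are even, so $\{p_1q_2,p_2q_1\}=\{0,2\}$; the vanishing product together with oddness of $D_1$ forces $q_2=0$ or $p_2=0$, i.e.\ $D_2\in\mathbb{Z}_{>0}(2\sigma-g)$ or $D_2\in\mathbb{Z}_{>0}g$, and the value $2$ then pins down $D_2=2\sigma-g$ (resp.\ $D_2=g$) and leaves $D_1$ ranging over $\sigma+n(2\sigma-g)$ (resp.\ $\sigma+ng$) for $n\ge0$; these are cases (b) and (c). The converse in each case is a direct intersection computation.

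The only genuinely delicate point is the bookkeeping in (2): without the parity constraint $p_i\equiv q_i\pmod 2$ the equation $p_1q_2+p_2q_1=2$ would admit spurious non-negative integer solutions that do not come from divisor classes, so I expect the main thing to get right is the translation between $(p,q)$-data and the actual classes $\sigma+ng$ and $\sigma+n(2\sigma-g)$, and to confirm that every class appearing on the right-hand side is effective (which holds because each is a sum of the effective classes $\sigma$, $g$, $2\sigma-g$).
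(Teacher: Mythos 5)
Your proof is correct, and it is organized quite differently from (and more completely than) the argument in the source. The paper's proof is only a fragment: it treats part (2) under the extra hypothesis $(D_1^2)=0$, pins $D_1$ down to $g$ or $2\sigma-g$ via the preceding lemma, and then uses that the orthogonal complement of an isotropic class in the rank-two lattice ${\Bbb Z}\sigma\oplus{\Bbb Z}g$ is its own span to write $D_2-\sigma=n(2\sigma-g)$ (resp.\ $\sigma+ng$), with effectivity forcing $n\ge 0$; part (1), the case $D_1=D_2=\sigma$, and the exhaustiveness of the case division are not argued. You instead pass to the isotropic coordinates $p=(D\cdot g)$, $q=(D\cdot(2\sigma-g))$, so that $(D^2)=pq$ and $(D_1\cdot D_2)=\tfrac12(p_1q_2+p_2q_1)$, and reduce both statements to non-negative integer solutions of $p_1q_2+p_2q_1=0$ or $2$ subject to the parity constraint $p\equiv q\pmod 2$ — correctly identified as the essential point, since without it $(1,0),(0,2)$ would be a spurious solution. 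This buys a uniform treatment of (1) and (2) and, in particular, a proof that $\sigma$ is the only non-isotropic class that can occur, which the paper's orthogonal-complement trick does not by itself deliver; the paper's route is shorter once one of the classes is known to be isotropic. Your verifications (the dictionary $g\leftrightarrow(0,2)$, $2\sigma-g\leftrightarrow(2,0)$, $\sigma\leftrightarrow(1,1)$, the translation $a=2n+1$, $b=-n$ giving $\sigma+n(2\sigma-g)$, and the effectivity of the classes on the right-hand side) all check out against the intersection numbers $(\sigma^2)=(\sigma\cdot g)=1$, $(g^2)=0$.
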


\begin{proof}
Assume that $(D_1 \cdot D_2)=1$ and $(D_1^2)=0$.
Then $D_1=2\sigma-g,g$.
If $D_1=2\sigma-g$, then 
$(\sigma \cdot (2\sigma-g))=1$ implies
$D_2-\sigma=n(2\sigma-g)$ $(n \in {\Bbb Z})$.
If $\sigma+\xi$ is effective, then $n \geq 0$.
Therefore $D_2=\sigma+n(2\sigma-g)$ $(n \in {\Bbb Z}_{\geq 0})$.
 
If $D_1=g$, then we also see that $D_2=\sigma+ng$ $(n \in {\Bbb Z}_{\geq 0})$.
\end{proof}
 
\end{NB}

\end{document}